\theoremstyle{plain}
\newtheorem{theorem}{Theorem}[section]
\newtheorem{proposition}[theorem]{Proposition}
\newtheorem{lemma}[theorem]{Lemma}
\newtheorem{corollary}[theorem]{Corollary}
\newtheorem{definition}[theorem]{Definition}
\newtheorem{example}[theorem]{Example}
\theoremstyle{remark}
\newtheorem{remark}[theorem]{Remark}
\numberwithin{equation}{section}
\renewcommand{\comment}[1]{\vspace{5 mm}\par \noindent
\framebox{\begin{minipage}[c]{0.95 \textwidth}
 #1 \end{minipage}}\vspace{5 mm}\par}
\renewcommand{\comment}[1]{}
\newcommand{\new}[1]{\ensuremath{\blacktriangleright}#1\ensuremath{\blacktriangleleft}}
\renewcommand{\new}[1]{#1}
\newcommand{\cnew}[1]{\ensuremath{\blacktriangleright}#1\ensuremath{\blacktriangleleft}}
\renewcommand{\cnew}[1]{#1}
\newcommand{\bC}{\mathbb{C}}
\newcommand{\Ly}{\mathcal{L}_Y}
\newcommand{\Lc}{\mathcal{L}_{c}}
\newcommand{\Sym}{\ensuremath{\operatorname{Sym}}}
\newcommand{\Qsym}{\ensuremath{\operatorname{QSym}}}
\newcommand{\qs}{{\mathcal{S}}}		
\newcommand\partitionof[1]{\widetilde{#1}}
\newcommand\reverse[1]{{#1}^{r}}
\newcommand{\cont}{\mathrm{cont}} 
\newcommand{\rT}{T} 
\newcommand{\rtau}{{\tau}} 
\newcommand{\suchthat}{\;|\;}
\newcommand{\cskew}{{/\!\!/}}
\newcommand{\rhoc}{\rho}
\newcommand{\spam}{\operatorname{span}}
\newcommand{\rect}{\mathrm{rect}}
\newcommand{\stan}{\mathrm{stan}}
\newcommand{\SRT}{\ensuremath{\operatorname{SRT}}}
\newcommand{\SSRT}{\ensuremath{\operatorname{SSRT}}}
\newcommand{\LRRTL}{\ensuremath{\operatorname{LRRT}^\mathfrak{l}}}
\newcommand{\LRRTR}{\ensuremath{\operatorname{LRRT}^\mathfrak{r}}}
\newcommand{\SRCT}{\ensuremath{\operatorname{SRCT}}}
\newcommand{\SSRCT}{\ensuremath{\operatorname{SSRCT}}}
\newcommand{\LRRCTL}{\ensuremath{\operatorname{LRRCT}^\mathfrak{l}}}
\newcommand{\LRRCTR}{\ensuremath{\operatorname{LRRCT}^\mathfrak{r}}}
\newlength\cellsize \setlength\cellsize{15\unitlength}
\newcommand\cellify[1]{\def\thearg{#1}\def\nothing{}%
\ifx\thearg\nothing
\vrule width0pt height\cellsize depth0pt\else
\hbox to 0pt{\usebox2\hss}\fi%
\vbox to 15\unitlength{
\vss
\hbox to 15\unitlength{\hss$#1$\hss}
\vss}}
\newcommand\tableau[1]{\vtop{\let\\=\cr
\setlength\baselineskip{-16000pt}
\setlength\lineskiplimit{16000pt}
\setlength\lineskip{0pt}
\halign{&\cellify{##}\cr#1\crcr}}}
\newcommand\expath[1]{%
\hbox to 0pt{\usebox3\hss}%
\vbox to 15\unitlength{
\vss
\hbox to 15\unitlength{\hss$#1$\hss}
\vss}}
\newcommand\bas[1]{\omit \vbox to \cellsize{ \vss \hbox to \cellsize{\hss$#1$\hss} \vss}}
\newcommand{\gst}{\bullet}
\begin{document}

\title[LR rules for symmetric skew quasisymmetric Schur functions]{Littlewood-Richardson rules for symmetric skew quasisymmetric Schur functions}
\author{Christine Bessenrodt}
\address{Institut f\"ur Algebra, Zahlentheorie und Diskrete Mathematik, Leibniz Universit\"at Hannover, Hannover, D-30167, Germany}
\email{bessen@math.uni-hannover.de}

\author{Vasu Tewari}
\address{Department of Mathematics, University of British Columbia, Vancouver, BC V6T 1Z2, Canada}
\email{vasu@math.ubc.ca}

\author{Stephanie van Willigenburg}
\address{Department of Mathematics, University of British Columbia, Vancouver, BC V6T 1Z2, Canada}
\email{steph@math.ubc.ca}
\thanks{
The second and third authors were supported in part by the National Sciences and Engineering Research Council of Canada. The third author was supported in part by the Alexander von Humboldt Foundation.}
\subjclass[2010]{Primary 05E05; Secondary 05E10, 05E15, 06A07, 20C30, 14N15}
\keywords{composition, Littlewood-Richardson rule, quasisymmetric function,  Schur function, skew Schur function, symmetric function, tableaux}
\begin{abstract}
The classical Littlewood-Richardson rule is a rule for computing coefficients in many areas, and comes in many guises. In this paper
we prove two Littlewood-Richardson rules for symmetric skew quasisymmetric Schur functions that are analogous to the famed version of the classical Littlewood-Richardson rule involving Yamanouchi words. Furthermore, both our rules contain this classical Littlewood-Richardson rule as a special case. We then apply our rules
to combinatorially classify symmetric skew quasisymmetric Schur functions. This answers affirmatively a conjecture of Bessenrodt, Luoto and van Willigenburg.
\end{abstract}

\maketitle

\section{Introduction}
An important linear basis for the \new{ring} of \new{commutative} symmetric functions, $\Sym$, is the basis of Schur functions, denoted by $s_{\lambda}$ where $\lambda$ runs over all partitions. Since $\Sym$ has various interpretations such as the representation ring of the symmetric group, or the cohomology ring of the Grassmannian, it is natural to expect that the basis of Schur functions plays a significant role in these settings as well. Indeed, this basis captures a significant amount of the interplay between algebraic combinatorics and fields such as representation theory and algebraic geometry as is demonstrated aptly by the Littlewood-Richardson rule. This rule is a combinatorial algorithm that allows us to compute the structure coefficients $c_{\nu\mu}^{\lambda}$ occurring in the expansion of the skew Schur function $s_{\lambda/\mu}$ in the Schur basis
\begin{align*}
s_{\lambda/\mu}=\sum_{\nu}c^{\lambda}_{\nu\mu}s_{\nu}.
\end{align*}
The $c^{\lambda}_{\nu\mu}$ above are called Littlewood-Richardson coefficients. Additionally, they arise in representation theory as multiplicities of irreducible $GL_n(\bC)$ representations in the tensor product of two irreducible $GL_n(\bC)$ representations. Meanwhile in algebraic geometry, they are the structure coefficients describing the cup product of two Schubert classes. Thus, given the key role played by these coefficients in various areas of mathematics, a combinatorial rule that allows us to compute them has many applications. The Littlewood-Richardson rule was first formulated in \cite{LR-1} and was proved by Sch\"utzenberger \cite{Schutzenberger} and Thomas \cite{Thomas} independently in the 1970s. Since then, various versions of the rule have been given \cite{berenstein-zel,Gasharov,Knutson-Tao,Kreiman,Stembridge} and have contributed to a rich theory that has strong links with other combinatorial constructions such as the Robinson-Schensted algorithm, jeu de taquin and the plactic monoid \cite{FominGreene,LascouxSchutzenberger}. Arguably the most popular version involves Yamanouchi words (or lattice words) with content a partition, and this is the one that is relevant for our purposes.

A \new{ring} that contains $\Sym$ and is closely tied to it is the \new{ring} of quasisymmetric functions, $\Qsym$. Quasisymmetric functions were introduced by Gessel in \cite{Gessel} as generating functions for $P$-partitions. Since then, they have become an indispensable tool in analyzing combinatorial structures as is evident from their applications to areas as diverse as enumerating permutations \cite{GesselReutenauer}, card shuffling \cite{stanley-shuffle}, combinatorial expansions of special functions such as Macdonald polynomials \cite{HHL-1} and Kazhdan-Lusztig polynomials \cite{BilleraBrenti}, representation theory of the Hecke algebra \cite{hivert-1}, discrete geometry \cite{BilleraHsiaovanWilligenburg}, and combinatorial Hopf algebras \cite{AguiarBergeronSottile}.

Haglund, Luoto, Mason, and van Willigenburg \cite{HLMvW11} introduced a new basis for $\Qsym$ using combinatorial objects called semistandard reverse composition tableaux. This is the basis of quasisymmetric Schur functions. These functions refine the Schur functions in a natural way and also refine many of their properties.

In \cite{BLvW}, Bessenrodt, Luoto, and van Willigenburg
defined skew quasisymmetric Schur functions and showed that these functions expand nonnegatively in the basis of quasisymmetric Schur functions.
\begin{align*}
\qs_{\alpha \cskew \beta} = \sum_{\delta}C^{\alpha}_{\delta\beta}\qs_{\delta}
\end{align*}
The $C_{\delta\beta}^{\alpha}$ are called noncommutative Littlewood-Richardson coefficients. Additionally, they possess an interpretation that is similar to an interpretation of the classical Littlewood-Richardson coefficients. What is lacking though is an interpretation involving Yamanouchi words, which can be attributed to the fact that constructing Yamanouchi words with content a composition (that is not a partition) is impossible. However this situation can be remedied if we restrict our attention to \emph{symmetric} skew quasisymmetric Schur functions.

\begin{remark}The skew quasisymmetric Schur functions defined in \cite{BLvW}
might be termed ``left skew'' quasisymmetric Schur functions
as the authors collect the terms on the left side in the coproduct.
The study of the combinatorics of the
corresponding ``right skew'' quasisymmetric Schur functions has been initiated by Tewari in \cite{Tewari}, and
these functions arise from maximal chains in the right Pieri poset $\mathcal{R}_c$  introduced and studied in some detail therein.
These ``right skew'' quasisymmetric Schur functions $S_{\alpha\backslash\!\!\backslash  \gamma}$ have the following expansion:
\begin{eqnarray*}
S_{\alpha\backslash\!\!\backslash  \gamma}=\displaystyle\sum_{\beta}C_{\alpha\beta}^{\gamma}S_{\beta}\:.
\end{eqnarray*}
Just as the maximal chains in the poset $\mathcal{L}_c$ (that are equivalent to standard reverse composition tableaux) are the pertinent objects when expanding ``left skew'' quasisymmetric Schur functions, the maximal chains from $\alpha$ to $\gamma$ in the poset $\mathcal{R}_c$ allow the computation of the coefficients in the expansion above \cite[Corollary 8.11]{Tewari}.
However, unlike the cover relations in the poset $\mathcal{L}_c$, the cover relations in the poset $\mathcal{R}_c$ do not correspond to inclusion of composition diagrams in any way.
Thus, a Littlewood-Richardson rule for ``right skew'' quasisymmetric Schur functions involving Yamanouchi words that mirrors the classical Littlewood-Richardson rule or the generalizations given for the ``left skew'' quasisymmetric Schur functions in this article is not possible.
\end{remark}

Our \new{main} aim in this article is to give a Schur positive expansion for the symmetric skew quasisymmetric Schur functions, and to give a Yamanouchi interpretation for computing structure coefficients occurring in the expansion. We achieve this aim by giving a \emph{left} Littlewood-Richardson rule in Theorem~\ref{the:NCLR} and a \emph{right} Littlewood-Richardson rule in Theorem~\ref{the:NCLRR}. More specifically, we give a rule to compute the coefficient of $\qs_{\delta}$ in $\qs_{\alpha\cskew \beta}$ in the case where $\delta$ is a partition or the reverse of a partition. As an application of these rules, we obtain a combinatorial classification\new{, that is, a classification in terms of skew reverse composition shapes,} of symmetric skew quasisymmetric Schur functions in Theorem~\ref{the:uniform}. In so doing, we prove a conjecture of Bessenrodt, Luoto and van Willigenburg. More precisely, in \cite[Corollary 5.2]{BLvW}, they proved that if $\alpha\cskew \beta$ is a uniform skew reverse composition shape then $\qs_{\alpha\cskew\beta}$ is symmetric, and further conjectured \cite[Conjecture 6.1]{BLvW} that the reverse implication also holds. We establish that this is indeed the case and our two Littlewood-Richardson rules are crucial to this end.

\section{Background}\label{sec:background}
In this section, we will define the combinatorial objects that will be useful to us later. For more details on the classical notions discussed herein, the reader should refer to \cite{stanley-ec2}, while a detailed exposition on more modern constructs such as reverse composition diagrams, reverse composition tableaux, and quasisymmetric Schur functions can be found in \cite{QSbook}.
\subsection{Compositions and partitions}
Given a positive integer $n$, a \emph{composition} $\alpha=(\alpha_1,\ldots,\alpha_k)$ of $n$ is defined to be an ordered list of positive integers whose sum is $n$. If $\alpha$ is a composition of $n$, then we denote this by $\alpha \vDash n$. The $\alpha _i$ for $1\leq i \leq k$ are called the \emph{parts} of $\alpha$, while $k$ is called the \emph{length} of $\alpha$ and is denoted by $\ell(\alpha)$. The \emph{size} of a composition $\alpha$, denoted by $\vert \alpha\vert$, is the sum of its parts. We denote by $\reverse{\alpha}$ the composition obtained by reversing $\alpha$, that is, if $\alpha=(\alpha_1,\ldots,\alpha_k)$ then $\reverse{\alpha}=(\alpha_k,\ldots,\alpha_1)$. Finally, for convenience we define the empty composition $\varnothing$ to be the unique composition of size and length equalling $0$.

We depict a composition $\alpha=(\alpha _1, \ldots , \alpha _k)$ using its \emph{reverse composition diagram},
also denoted by $\alpha$, which is the array of left-justified cells with $\alpha _i$ cells in row $i$ from the top.
A cell is said to be in row $i$ and column $j$ if it is in the $i$-th row from the top and $j$-th column from the left, and is referred to by the pair~$(i,j)$.

\begin{example}\label{ex:2432} The reverse composition diagram of $\alpha = (2,4,3,2)\vDash 11$ is shown below.
$$\tableau{\ &\ \\\ &\ &\ &\ \\\ &\ &\  \\\ &\ }$$
Note also that $\reverse{\alpha}=(2,3,4,2)$.
\end{example}

A \emph{partition} $\lambda=(\lambda_1,\ldots,\lambda_k)$ of $n$ is a list of positive integers that sum to $n$ and additionally satisfy $\lambda_1\geq \cdots \geq \lambda_k$. The $\lambda_i$ for $1\leq i\leq k$ are called the \textit{parts} of $\lambda$, while $k$ is called the \emph{length} of $\lambda$, and is denoted by $\ell(\lambda)$. The \emph{size} of $\lambda$, denoted by $\vert \lambda\vert$, is the sum of its parts. If $\lambda$ is a partition of size $n$, then we denote this by $\lambda \vdash n$. As with compositions, given a partition $\lambda= (\lambda_1,\ldots, \lambda_k)$ we define $\reverse{\lambda}$ to be the ordered list of positive integers obtained by reversing $\lambda$, that is, $\reverse{\lambda}=(\lambda_k,\ldots,\lambda_1)$. Furthermore, we define the empty partition, denoted by $\varnothing$, to be the unique partition of size and length equalling $0$. Finally, note that given any composition $\alpha$, there is a corresponding partition, denoted by $\widetilde{\alpha}$, which is obtained by sorting the parts of $\alpha$ into weakly decreasing order.

We depict a partition using its \textit{Young diagram}. Given a partition $\lambda=(\lambda_1,\ldots,\lambda_k)\vdash n$, the Young diagram of $\lambda$, also denoted by $\lambda$, is the left-justified array of $n$ cells, with $\lambda_i$ cells in the $i$-th row. We will be using the English convention for Young diagrams. That is, the rows are numbered from top to bottom and the columns from left to right. We refer to the cell in the $i$-th row and $j$-th column by the ordered pair $(i,j)$. The \emph{transpose} of a partition $\lambda$, denoted by $\lambda^t$, is the partition whose Young diagram is the array of cells
\begin{align*}
\lambda^t=\{(i,j)\suchthat (j,i)\in \lambda\}.
\end{align*}
\begin{example}\label{ex:4322} Shown below on the left is the Young diagram of the partition $\lambda = (4,3,2,2)\vdash 11$. On the right is $\lambda^t$.
$$\tableau{\ &\ &\ &\ \\ \ &\ &\ \\ \ &\ \\ \ &\ }\qquad \tableau{\ &\ &\ &\ \\ \ &\ &\ &\ \\  \ & \ \\ \  }$$
Also, $\widetilde{\alpha}=\lambda$ where $\alpha=(2,4,3,2)$ is the composition in Example \ref{ex:2432}.
\end{example}

\subsection{Tableaux}
We will start by defining a poset structure on compositions that allows us to construct the main combinatorial object required to introduce quasisymmetric Schur functions.
Let $\alpha = (\alpha_1,\ldots, \alpha_{\ell(\alpha)})$ and $\beta$ be compositions. Define a cover relation $\lessdot _{c}$ on compositions as follows.
\begin{eqnarray*}
\alpha \lessdot_{c} \beta \Longleftrightarrow \left\lbrace \begin{array}{ll} \beta= (1,\alpha_1,\ldots,\alpha_{\ell(\alpha)}) & \\
\text{ or}\\ \beta= (\alpha_1,\ldots,\alpha_k +1,\ldots,\alpha_{\ell(\alpha)}) & \text{ and $\alpha_i\neq \alpha_k$ for all $i<k$} \end{array}\right.
\end{eqnarray*}
The \emph{reverse composition poset} $\mathcal{L}_{c}$ is the poset on the set of compositions where the partial order $<_{c}$ is obtained by taking the transitive closure of the cover relation $\lessdot_{c}$ above.
If $\beta <_{c} \alpha$ and $\beta$ is drawn in the bottom left corner of $\alpha$, then
the \textit{skew reverse composition shape} $\alpha \cskew \beta$ is
defined to be the array of cells
\begin{eqnarray*}
\alpha\cskew \beta = \{(i,j)\suchthat (i,j)\in \alpha, (i,j)\notin \beta \}.
\end{eqnarray*}
We refer to $\alpha$ as the \textit{outer shape} and to $\beta$ as the \textit{inner shape}. If the inner shape is $\varnothing$, instead of writing $\alpha \cskew \varnothing$, we just write $\alpha$ and refer to $\alpha$ as a \textit{straight shape}. The \emph{size} of the skew reverse composition shape $\alpha\cskew \beta$, denoted by $\vert \alpha\cskew\beta\vert$, is the number of cells in the skew reverse composition shape, that is, $\vert \alpha\vert-\vert\beta\vert$.

We will now define a skew reverse composition shape that will play a key role in our classification of symmetric skew quasisymmetric Schur functions in Section \ref{sec:uniform}. Let \new{$\beta <_{c} \alpha$} and suppose further that $\ell(\beta)=k$ and $\ell(\alpha)=k+\ell$. Then we call the top $\ell$ rows of $\alpha \cskew \beta$ the \emph{upper shape} of $\alpha \cskew \beta$ and the remaining bottom $k$ rows the \emph{lower shape} of $\alpha \cskew \beta$. If the upper shape of $\alpha \cskew \beta$ is a rectangle, that is, $\alpha _1 = \cdots = \alpha _\ell$, then we say that $\alpha \cskew \beta$ is \emph{uniform}.

\begin{example}\label{ex:uniform}
If $\alpha=(2,2,4,3,3)$ and $\beta=(3,1,2)$, then $\beta <_{c} \alpha$,
and $\alpha \cskew \beta$ is of uniform skew reverse composition shape as depicted
below.
$$
\tableau{\ & \ \\ \ & \ \\ \gst &\gst &\gst &\ \\ \gst &\ &\ \\ \gst &\gst &\ }
$$
\end{example}
Next we will define a semistandard reverse composition tableau\new{, and illustrate the definition in Example \ref{ex:SRCT3432}.}
\begin{definition}
A \emph{semistandard reverse composition tableau} (abbreviated to $\SSRCT$) $\tau$ of \emph{shape} $\alpha \cskew \beta$ is a filling
\begin{eqnarray*}
\tau: \alpha\cskew \beta \longrightarrow \mathbb{Z}^{+}
\end{eqnarray*}
that satisfies the following conditions
\begin{enumerate}
\item the entries i\new{n} each row decrease weakly from left to right,
\item the entries in the leftmost column increase strictly from top to bottom,
\item if $i< j$ and $(j,k+1) \in \alpha \cskew \beta$ and either $(i,k)\in \beta$ or $\tau(i,k) \geq \tau(j,k+1)$ then either $(i,k+1)\in \beta$ or both $(i,k+1) \in \alpha \cskew \beta $ and $\tau(i,k+1) > \tau(j,k+1)$. (This condition will be referred to as \emph{the triple condition}.)
\end{enumerate}
\end{definition}
\new{Note that the third condition above implies that no two entries in the same column can be equal.}

We will denote the set of $\SSRCT$s of shape $\alpha\cskew \beta$ by $\SSRCT(\alpha\cskew \beta)$. Given an $\SSRCT$ $\tau$, we define the \emph{content} of $\tau$, denoted by $\cont (\tau)$, to be the list of nonnegative integers
\begin{align*}
\cont (\tau)=(c_1,\ldots, c_{max})
\end{align*}
where $c_i$ equals the number of times $i$ appears in $\tau$ and $max$ is the greatest integer appearing in $\tau$.

Given a positive integer $n$, let $[n]=\{1,\ldots,n\}$. Furthermore, define $[0]$ to be the empty set. Then, a \emph{standard reverse composition tableau} (abbreviated to SRCT) is an $\SSRCT$ in which the filling is a bijection $\tau: \alpha\cskew\beta \longrightarrow [|\alpha\cskew \beta|]$. That is, in an SRCT each number from the set $\{1,2,\ldots, |\alpha\cskew\beta|\}$ appears exactly once. We denote the set of all $\SRCT$s of shape $\alpha\cskew \beta$ by $\SRCT(\alpha\cskew\beta)$.

A particular $\SRCT$ that will be of importance to us is the canonical reverse composition tableau. Given a composition $\alpha = (\alpha _1 , \ldots , \alpha _k)$, the \emph{canonical reverse composition tableau} of \emph{shape} $\alpha$, denoted by $\rtau _\alpha$, is constructed as follows. In the reverse composition diagram of $\alpha$, the $i$-th row for $1\leq i\leq k$ is filled with the consecutive positive integers from $\displaystyle\sum_{j=1}^{i}\alpha_j$ down to $\left(1+\displaystyle\sum_{j=1}^{i-1}\alpha_j\right)$ in that order from left to right.

\begin{example}\label{ex:SRCT3432}
Shown below, on the left, is an $\SSRCT$ of shape $(2,4,2,3)\cskew (1,2)$ and, on the right, is \new{the canonical reverse composition tableau} $\rtau _{(3,4,3,2)}$.
$$\rtau =
\tableau{5&3\\ 8 &8&6&3\\\gst  &2\\\gst &\gst & 9}\qquad
\rtau _{(3,4,3,2)}=
\tableau{3&2&1\\7&6&5&4\\10&9&8\\12&11}$$
The $\SSRCT$ on the left has content $(0,1,2,0,1,1,0,2,1)$.
\end{example}

Now we will define \new{analogues of $\Lc$ and $\SSRCT$s that are} central to the classical theory of symmetric functions, \new{namely, Young's lattice and semistandard reverse tableaux.}
Let $\mu = (\mu_1,\ldots, \mu_{\ell (\mu)})$ and $\lambda$ be partitions. Define a cover relation $\lessdot_{Y}$ on partitions as follows.
\begin{eqnarray*}
\mu \lessdot_{Y} \lambda \Longleftrightarrow \left\lbrace \begin{array}{ll} \lambda = (\mu_1,\ldots,\mu_{\ell (\mu)},1) & \text{ or}\\ \lambda= (\mu_1,\ldots,\mu_k +1,\ldots,\mu_{\ell(\mu)}) & \text{ and $\mu_i\neq \mu_k$ for all $i<k$} \end{array}\right.
\end{eqnarray*}
The poset on the set of partitions obtained by taking the transitive closure of the cover relation $\lessdot_{Y}$ is well-known as \emph{Young's lattice}, and we will denote it by $\mathcal{L}_{Y}$. We will denote the order relation on $\mathcal{L}_{Y}$ by $<_{Y}$.
If $\mu <_{Y} \lambda$, the \textit{skew shape} $\lambda/\mu$ is defined to be the array of cells
\begin{eqnarray*}
\lambda/\mu = \{(i,j)\suchthat (i,j)\in \lambda, (i,j)\notin \mu \}
\end{eqnarray*}
where $\mu$ is drawn in the top left corner of $\lambda$.
We refer to $\lambda$ as the \textit{outer shape} and to $\mu$ as the \textit{inner shape}. If the inner shape is $\varnothing$, instead of writing $\lambda/\varnothing$, we just write $\lambda$ and call $\lambda$ a \textit{straight shape}. The \emph{size} of a skew shape~$\lambda/ \mu$, denoted by $\vert \lambda/\mu\vert$, is the number of cells in the skew shape, which is $\vert \lambda\vert-\vert\mu\vert$.

\begin{definition}
A \emph{semistandard reverse tableau} (abbreviated to $\SSRT$) $T$ of \emph{shape} $\lambda/\mu$ is a filling
\begin{eqnarray*}
T: \lambda/\mu \longrightarrow \mathbb{Z}^{+}
\end{eqnarray*}
that satisfies the following conditions
\begin{enumerate}
\item the entries in each row decrease weakly from left to right,
\item the entries in each column decrease strictly from top to bottom.
\end{enumerate}
\end{definition}
We will denote the set of $\SSRT$s of shape $\lambda/ \mu$ by $\SSRT(\lambda/\mu)$. Exactly as we did with $\SSRCT$s, we define the \emph{content} of an $\SSRT$ $T$, denoted by $\cont (T)$, to be the list of nonnegative integers
\begin{align*}
\cont (\tau)=(c_1,\ldots, c_{max})
\end{align*}
where $c_i$ equals the number of times $i$ appears in $\tau$ and $max$ is the greatest integer appearing in $\tau$.

A \emph{standard reverse tableau} (abbreviated to $\SRT$) is an $\SSRT$ in which the filling is a bijection $T: \lambda/\mu \longrightarrow [|\lambda/\mu|]$, that is, each number from the set $\{1,2,\ldots, |\lambda/\mu|\}$ appears exactly once.
The set of all $\SRT$s of shape $\lambda/\mu$ is denoted by $\SRT(\lambda/\mu)$.

As in the case of $\SRCT$s, there is a distinguished tableau associated with any partition. Given a partition $\lambda = (\lambda _1 , \ldots , \lambda _k)$, the \emph{canonical reverse tableau} of \emph{shape} $\lambda$, denoted by $T _\lambda$, is constructed as follows. In the Young diagram of $\lambda$, the $i$-th row for $1\leq i\leq k$ is filled with the consecutive positive integers from $\displaystyle\sum_{j=i}^{k}\lambda_{j}$ down to $\left(1+\displaystyle\sum_{j=i+1}^{k}\lambda_{j}\right)$ in that order from left to right.
\begin{example}\label{ex:SSRT4332}
Shown below, on the left, is an $\SSRT$ of shape $(4,3,2,2)/(2,1)$ and, on the right, is \new{the canonical reverse tableau} $T _{(4,3,3,2)}$.
$$\tableau{\gst & \gst &9 &3 \\ \gst & 8&6 \\ 8&3\\5&2 }\qquad \tableau{12 &11&10 &9\\ 8&7&6\\5&4&3\\2&1}$$
\end{example}

It might seem that the two types of tableaux introduced in this subsection, $\SSRCT$s and $\SSRT$s, are unrelated. To the contrary, there is a bijection between appropriate sets of $\SSRCT$s and $\SSRT$s. We will describe this bijection next.

Let $\SSRCT(-\cskew \beta)$ denote the set of all $\SSRCT$s with inner shape $\beta$, and $\SSRT(-/\partitionof{\beta})$ denote the set of all $\SSRT$s with inner shape $\partitionof{\beta}$. Then
\begin{equation*}\label{eq:rhocalpha}
\rhoc _{\beta}: \SSRCT(-\cskew \beta) \to \SSRT(- / \partitionof{\beta})
\end{equation*}is defined as follows \cite[Chapter 4]{QSbook}\new{, which generalizes the map for semistandard skyline fillings \cite{mason-1}.}
Given $\rtau\in \SSRCT(-\cskew \beta)$, obtain $\rhoc _\beta(\rtau)$ by writing the entries in each column in decreasing order from top to bottom and top-justifying these new columns on the inner shape $\partitionof{\beta}$ (that might be empty).

The inverse map
\begin{equation*}\label{eq:rhocinversealpha}
\rhoc ^{-1}_\beta : \SSRT(-/\partitionof{\beta})\to \SSRCT(-\cskew \beta)
\end{equation*}is also straightforward to define.

Given $\rT\in \SSRT(-/\partitionof{\beta})$,
\begin{enumerate}
\item take the set of $i$ entries in the leftmost column of $\rT$ and write them in increasing order in rows $1, 2, \ldots, i$ above the inner shape with a cell in position $(i+1, 1)$ but not $(i,1)$ to form the leftmost column of $\rtau$,
\item take the set of entries in column 2 in decreasing order and place them in the row with the smallest index so that either
\begin{itemize}
\item the cell to the immediate left of the number being placed is filled and the row entries weakly decrease when read from left to right
\item the cell to the immediate left of the number being placed belongs to the inner shape,
\end{itemize}
\item repeat the previous step with the set of entries in column $k$ for $k= 3, \ldots , \partitionof{\beta} _1$.
\end{enumerate}

\begin{example}\label{ex:rhocinverse}
Let $\beta=(2,3,2)$.
$$\tableau{\gst & \gst & \gst & 8 &7\\ \gst & \gst & 6&4 \\\gst & \gst & 3\\5&5&2\\4\\1}\quad \mathrel{\mathop{\rightleftarrows}^{\mathrm{\rho_{(2,3,2)}^{-1}}}_{\mathrm{\rho_{(2,3,2)}}}} \quad \tableau{1\\4\\5&5&3\\ \gst & \gst & 6&4\\ \gst & \gst & \gst & 8&7\\ \gst & \gst & 2}$$
\end{example}

Finally, recall that we can \emph{standardize} our tableaux of either sort by taking the, say, $\nu _1$ 1s from right to left and relabelling them $1, 2, 3, \ldots , \nu_1$, then the, say, $\nu _2$ 2s from right to left and relabelling them $\nu_1 +1 , \ldots , \nu_1+\nu _2$ etc. The standardization of an $\SSRCT$ $\tau$ (respectively $\SSRT$ $T$) will be denoted by $\stan (\tau)$ (respectively $\stan (T)$); observe that standardization gives an SRCT (respectively SRT), and that it commutes with the map $\rho_\beta$.
We will do an example in the case of $\SSRCT$s next.
\begin{example}\label{ex:standardize}
Shown below are an $\SSRCT$ of shape $(9,3,2,4)\cskew (4,1,1,3)$ (left) and its standardization (right).
$$\tableau{\bullet&\bullet&\bullet&\bullet&3&3&1&1&1\\
\bullet&2&1\\
\bullet&1\\
\bullet&\bullet&\bullet&2}
\quad\quad \tableau{\bullet & \bullet &\bullet &\bullet &9&8&3&2&1\\ \bullet &7&4 \\ \bullet &5 \\ \bullet &\bullet &\bullet & 6}
$$
\end{example}

\subsection{Rectification of tableaux}
Another tool we will need is the \emph{rectification} of an $\SRCT$, which is inserting the entries of a column of the $\SRCT$ taken in increasing order (and columns taken from left to right) into an empty $\SSRCT$ using the following insertion process \cite[Procedure 3.3]{mason-1}. Suppose we start with an $\SSRCT$ $\rtau$ whose longest row has length $r$.
To insert a positive integer $k_1$, the result being denoted $k_1\to
\rtau$,
scan column positions starting with the top position in column $j
= r+1$.
\begin{enumerate}
\item If the current position is empty and at the end of a row of
length $j-1$, and $k_1$ is weakly less than
the last entry in the row, then place $k_1$ in this empty position
and stop. Otherwise, if the position
is nonempty and contains $k_2<k_1$ and $k_1$ is weakly less than the
entry to the immediate left of $k_2$,
let $k_1$ bump $k_2$, that is, swap $k_2$ and $k_1$.
\item Using the possibly new $k_i$ value, continue scanning successive
positions in the column top to bottom as in the previous step, bumping
whenever possible, and then continue scanning at the top of the next
column to the left. (Decrement $j$.)
\item If an element is bumped into the leftmost column, then create a new
row containing one cell to contain the element,
placing the row such that the leftmost column is strictly increasing top
to bottom, and stop.
\end{enumerate}
\begin{example}\label{ex:MRSK}
$$5 \to \quad \tableau{
   1 & 1 \\
   3 & 2 & 2 & 2 \\
   6 & 5 & 4  \\
   7 & 4 & 3   \\
}
 \qquad =  \qquad
\tableau{
   1  & 1\\
   {2}               \\
   3  & {3} & 2 & 2 \\
   6        & 5 & {5}  \\
   7        & 4 & {4}   \\
} $$\end{example}
Given an $\SRCT$ $\tau$, its rectification is denoted by $\rect (\tau)$.
\begin{example}\label{ex:rectificationSRCT}
Let $\tau$ be the $\SRCT$ shown below.
$$
\tableau{ 3 & 2\\ \gst & \gst & \gst & 6\\ \gst & 1\\ \gst & \gst & 5 & 4}
$$
Then $\rect(\tau)$ is computed by inserting the integers $3,1,2,5,4,6$ in that order using the insertion process outlined above, starting from the empty $\SSRCT$. We obtain the following sequence of $\SSRCT$s, with the final tableau being $\rect(\tau)$.
\begin{align*}
\varnothing \rightarrow \tableau{3} \rightarrow \tableau{3  &1} \rightarrow \tableau{1\\3& 2}\rightarrow\tableau{1\\3& 2\\5}\rightarrow\tableau{1\\3 & 2\\5&4}\rightarrow\tableau{1\\3&2\\5&4\\6}
\end{align*}
Notice that $\rect(\tau)=\tau_{(1,2,2,1)}$, or in words, $\tau$ rectifies to $\tau_{(1,2,2,1)}$.
\end{example}

This algorithm is analogous to the \emph{rectification} of an $\SRT$ \cite[Section 2.6]{QSbook}, which is inserting the entries of a column of the $\SRT$ taken in increasing order (and columns taken from left to right) into an empty $\SSRT$. The following insertion process is used, which inserts a positive integer $k_1$ into an $\SSRT$ $\rT$ and is denoted by $\rT\leftarrow k_1$.
\begin{enumerate}
\item If $k_1$ is less than or equal to the last entry in row 1, place it at the end of the row, else
\item find the leftmost entry in that row strictly smaller than $k_1$, say $k_2$, then
\item replace $k_2$ by $k_1$, that is, $k_1$ {bumps} $k_2$.
\item Repeat the previous steps with $k_2$ and row 2, $k_3$ and row 3, etc.
\end{enumerate}

\begin{example}\label{ex:rschentsted}
$$\tableau{7 & 5 & 4 & 2\\6 & 4 & 3  \\3 & 2 & 2 \\1 & 1
}\quad  \gets 5  \qquad
=  \qquad
\tableau{
 7        & 5 &  {5} & 2\\ 6        & 4 &  {4}  \\3 &  {3}  & 2 \\{2} & 1 \\{1}
}$$
\end{example}
Given an $\SRT$ $T$, we denote its rectification by $\rect (T)$.
\begin{example}\label{ex:rectificationSRT}Let $T$ be the $\SRT$ shown below.
$$\tableau{\gst &\gst &\gst & 6\\ \gst &\gst &5 &4 \\\gst & 2 \\3&1 }$$
To compute the rectification of $T$, we insert the integers $3,1,2,5,4,6$ in that order into an empty $\SSRT$. We get the following sequence of $\SSRT$s.
\begin{align*}
\varnothing \rightarrow
\tableau{3}
\rightarrow
\tableau{3&1}\rightarrow
\tableau{3&2\\1}\rightarrow
\tableau{5&2\\3\\1}\rightarrow
\tableau{5&4\\3&2\\1}\rightarrow
\tableau{6&4\\5&2\\3\\1}
\end{align*}
The final $\SRT$ obtained above is $\rect(T)$.
\end{example}

\subsection{Symmetric functions and Schur functions}
Consider $\bC [[x_1 , x_2 , x_3 , \ldots ]]$, the graded \new{ring} of formal power series \new{of bounded total degree} in the commuting indeterminates $x_1,x_2,\ldots$ graded by total monomial degree. An important \new{subring} of it is the \new{ring} of symmetric functions. We will define it by giving a basis for it.

\begin{definition}\label{def:Msymmetric}
Let $\lambda=(\lambda_1,\ldots,\lambda_k)\vdash n$ be a partition. Then the
\emph{monomial symmetric function} $m_{\lambda}$ is defined by
\[m_{\lambda}=\sum_{\substack{(i_1,\ldots,i_k)\\i_1<\cdots <i_k}}  \; \sum_{\substack{\alpha\vDash n\\\widetilde{\alpha}=\lambda}} x_{i_1}^{\alpha_1}\cdots x_{i_k}^{\alpha_k}.\]
Furthermore, we set $m_\varnothing=1$.
\end{definition}
\begin{example}\label{ex:mlambda}
We have
$m_{(2,1)} = x_1^2x_2^1 +x_1^1x_2^2+x_1^2x_3^1+x_1^1x_3^2+\cdots $.
\end{example}
We can now define the \new{\emph{ring of symmetric functions}}, $\Sym$, which is a graded \new{subring} of $\bC [[x_1 , x_2 , x_3 , \ldots ]]$, by $$\Sym = \bigoplus _{n\geq 0} \Sym ^n$$ where $$\Sym ^n = \spam\{ m_\lambda\suchthat \lambda \vdash n\}.$$

Arguably the most important basis of $\Sym$ is the basis of Schur functions for the reasons given in the introduction. To describe this basis, we need to associate monomials with tableaux. Given $T\in \SSRT(\lambda/\mu)$, associate a monomial $\mathbf{x}^T$ to it as follows.
\begin{align*}
\mathbf{x}^T=\prod_{(i,j)\in \lambda/\mu}x_{T(i,j)}
\end{align*}
This given, we define the \emph{skew Schur function} indexed by the skew
shape~$\lambda/\mu$, denoted by $s_{\lambda/\mu}$, to be
\begin{align*}
s_{\lambda/\mu}=\sum_{T\in \SSRT(\lambda/\mu)}\mathbf{x}^T.
\end{align*}
If $\mu=\varnothing$ in the above definition, instead of writing $s_{\lambda/\varnothing}$,
we write $s_{\lambda}$ and call it a \emph{Schur function}.
For
an empty diagram, we set $s_\varnothing =1$.
Though not evident from the above definition,
\new{$s_{\lambda/\mu}$} is a symmetric function. Furthermore, the elements of the set $\{s_{\lambda}\suchthat\lambda\vdash n\}$ form a basis of $\Sym^{n}$ for any nonnegative integer~$n$. This implies that we can expand a skew Schur function as a $\bC$-linear combination of Schur functions. In fact, any skew Schur function is a linear combination of Schur functions with nonnegative integral coefficients:
\begin{align*}
s_{\lambda/\mu}=\sum_{\nu \vdash |\lambda/\mu|}c_{\nu\mu}^{\lambda}s_{\nu}.
\end{align*}
The structure coefficients $c_{\nu\mu}^{\lambda}$ above are called {Littlewood-Richardson coefficients}, and there are many combinatorial algorithms for computing them, two of which we will encounter later.

The skew Schur functions belong to a special class of symmetric functions, called \emph{Schur positive functions}, which are defined to be those symmetric functions that expand as a nonnegative linear combination of Schur functions.

\subsection{Quasisymmetric functions and quasisymmetric Schur functions}\label{subsec:Qsym} In this subsection we will define another \new{subring} of $\bC [[x_1 , x_2 , x_3 , \ldots ]]$ that contains $\Sym$\new{}. We will then proceed to describe a distinguished basis of this \new{subring}, called the basis of quasisymmetric Schur functions. This basis will be central to our investigations later on.
\begin{definition}\label{def:Mquasisymmetric}
Let $\alpha=(\alpha_1,\ldots,\alpha_k)\vDash n$ be a composition. Then the
\emph{monomial quasisymmetric function} $M_{\alpha}$ is defined by
\[M_{\alpha}=\sum_{\substack{(i_1,\ldots,i_k)\\i_1<\cdots <i_k}}
x_{i_1}^{\alpha_1}\cdots x_{i_k}^{\alpha_k}.\]
Furthermore, we set $M_\varnothing=1$.
\end{definition}

\begin{example}\label{ex:Malpha}
We have
$M_{(1,2)} = x_1^1x_2^2 +x_1^1x_3^2+x_1^1x_4^2+x_2^1x_3^2+\cdots $.
\end{example}

We can now define the \new{\emph{ring of quasisymmetric functions}}, $\Qsym$, which is a graded \new{subring} of $\bC [[x_1 , x_2 , x_3 , \ldots ]]$, by $$\Qsym = \bigoplus _{n\geq 0} \Qsym ^n$$ where $$\Qsym ^n = \spam\{ M_\alpha\suchthat \alpha \vDash n\}.$$

Just as we did in the case of $\SSRT$s, we can associate a monomial with an $\SSRCT$. Given $\tau \in \SSRCT(\alpha\cskew \beta)$, we associate to it a monomial $\mathbf{x}^{\tau}$ as follows.
\begin{align*}
\mathbf{x}^{\tau}=\prod_{(i,j)\in \alpha\cskew\beta} x_{\tau(i,j)}
\end{align*}
Then we can define the \emph{skew quasisymmetric Schur function} indexed by the skew reverse composition shape~$\alpha\cskew \beta$, denoted by $\qs_{\alpha\cskew\beta}$, to be
\begin{align*}
\qs_{\alpha\cskew\beta}=\sum_{\tau\in \SSRCT(\alpha\cskew\beta)}\mathbf{x}^{\tau}.
\end{align*}
If $\beta=\varnothing$, instead of writing $\qs_{\alpha\cskew \varnothing}$, we write $\qs_{\alpha}$ and call this the \emph{quasisymmetric Schur function} indexed by $\alpha$.
For an empty diagram, we set $\qs_\varnothing =1$.
An important property of quasisymmetric Schur functions is that
\begin{align*}
\Qsym ^n = \spam\{ \qs_\alpha\suchthat \alpha \vDash n\}.
\end{align*}
The original motivation for naming these functions quasisymmetric Schur functions is the following equality that shows that quasisymmetric Schur functions refine Schur functions in a natural way \new{\cite[Section 5]{HLMvW11}.}
$$s_{\lambda} = \sum _{\partitionof{\alpha} = \lambda} \qs _\alpha$$

\begin{example}\label{ex:Salpha} We compute the following expansion
$$\qs _{(1,2)} = x_1^1x_2^2+x_2^1x_3^2+x_1^1x_3^2+x_1^1x_2^1x_3^1+x_2^1x_3^1x_4^1+\cdots$$
from the monomials associated with the following $\SSRCT$s.
$$\tableau{1\\2&2}
\quad
\tableau{2\\3&3}
\quad
\tableau{1\\3&3}
\quad
\tableau{1\\3& 2}
\quad
\tableau{2\\4&3}\cdots$$
\end{example}

\medskip

In \cite{BLvW}, it was shown that skew quasisymmetric Schur functions expand as a nonnegative integer linear combination of quasisymmetric Schur functions,
and the expansion was given precisely as a generalization of the classical Littlewood-Richardson rule. We now recall this main result \cite[Theorem 3.5, Equation (3.5)]{BLvW},
which gives a \emph{noncommutative Littlewood-Richardson rule}.

\begin{theorem}
\label{BLvW-main}
Given a skew reverse composition shape~$\alpha\cskew \beta$, we have
\begin{align*}
\qs_{\alpha\cskew\beta}=\sum_{\delta\vDash |\alpha\cskew\beta|}C_{\delta\beta}^{\alpha}\qs_{\delta},
\end{align*}
where the structure coefficients $C_{\delta\beta}^{\alpha}$, called \emph{noncommutative Littlewood-Richardson coefficients}, are nonnegative integers described combinatorially by
\begin{align}\label{eq:nclr definition}
C_{\delta\beta}^{\alpha}=|\{\tau\in \SRCT(\alpha\cskew\beta)\suchthat \rect(\tau)=\tau_{\delta}\}|.
\end{align}
\end{theorem}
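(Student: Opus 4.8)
The assertion has three parts: the expansion of $\qs_{\alpha\cskew\beta}$ in the quasisymmetric Schur basis, the cardinality formula for its coefficients, and the fact that these coefficients are nonnegative integers. The last part is immediate once the formula \eqref{eq:nclr definition} is in force, since a cardinality is a nonnegative integer, so the plan is to establish the expansion together with the stated combinatorial description. I would model the argument on the classical proof that $s_{\lambda/\mu}=\sum_\nu c^\lambda_{\nu\mu}s_\nu$ via rectification: there one groups the standard tableaux of shape $\lambda/\mu$ by their rectification, exploits that rectification is descent-preserving, and uses that the number of skew tableaux rectifying to a fixed straight tableau depends only on its shape.

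First I would pass from semistandard to standard tableaux. Standardization $\stan$ sorts the monomials $\mathbf{x}^\tau$, $\tau\in\SSRCT(\alpha\cskew\beta)$, according to the SRCT $\stan(\tau)$ they produce, and each group assembles into a fundamental quasisymmetric function determined by the descent set of that SRCT. This gives $\qs_{\alpha\cskew\beta}=\sum_{\tau\in\SRCT(\alpha\cskew\beta)}F_{\comp(\des(\tau))}$ and, for straight shapes, $\qs_\delta=\sum_{\sigma\in\SRCT(\delta)}F_{\comp(\des(\sigma))}$, where $F$ denotes Gessel's fundamental basis, $\des$ the descent set of an SRCT, and $\comp$ the composition it encodes. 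It then suffices to prove the identity at the level of standard tableaux, that is, to match $\SRCT(\alpha\cskew\beta)$, descent compositions included, with $C^\alpha_{\delta\beta}$ copies of each $\SRCT(\delta)$.

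Next I would organize $\SRCT(\alpha\cskew\beta)$ by rectification. Since insertion preserves the multiset of entries, $\rect(\tau)$ is a standard tableau of some straight shape $\delta\vDash|\alpha\cskew\beta|$, so $\rect$ partitions $\SRCT(\alpha\cskew\beta)$ into fibers indexed by straight-shape SRCTs $\sigma$. I would then establish two facts: (i) rectification preserves the descent composition, $\comp(\des(\tau))=\comp(\des(\rect(\tau)))$; and (ii) for each straight shape $\delta$ the fiber size $|\{\tau\in\SRCT(\alpha\cskew\beta):\rect(\tau)=\sigma\}|$ is independent of the choice of $\sigma\in\SRCT(\delta)$. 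Granting these, the common value equals the fiber over the canonical tableau $\tau_\delta$, which, being of straight shape, rectifies to itself; this common value is exactly $C^\alpha_{\delta\beta}$. Hence, grouping by rectification and using (i),
$$\qs_{\alpha\cskew\beta}=\sum_{\sigma}|\{\tau:\rect(\tau)=\sigma\}|\,F_{\comp(\des(\sigma))}=\sum_{\delta}C^\alpha_{\delta\beta}\sum_{\sigma\in\SRCT(\delta)}F_{\comp(\des(\sigma))}=\sum_{\delta}C^\alpha_{\delta\beta}\,\qs_\delta,$$
which is the claimed expansion.

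The main obstacle is fact (ii), the shape-independence of fiber sizes. This is the reverse-composition analogue of the classical statement that the standard tableaux of a fixed straight shape are linked by local, descent-preserving involutions (a dual equivalence) that commute with jeu de taquin, forcing rectification fibers to depend only on the target shape. I would therefore develop, or import from the theory of reverse composition tableaux, a dual equivalence on $\SRCT(\delta)$ acting transitively on each shape class and lifting, through inverse insertion, to bijections between the fibers $\rect^{-1}(\sigma)$ and $\rect^{-1}(\sigma')$ for $\sigma,\sigma'\in\SRCT(\delta)$; alternatively one might transport the question to the classical dual equivalence for reverse tableaux through the shape-rearranging bijection $\rhoc_\beta$ and the compatibility of the two rectification procedures. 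By contrast fact (i) is comparatively routine, verified by tracking descents through each insertion step as in the classical case. It is precisely the transitivity in (ii) that upgrades the raw fiber cardinalities into genuine, representative-independent structure constants, legitimizing the choice of the canonical tableau $\tau_\delta$ in \eqref{eq:nclr definition}.
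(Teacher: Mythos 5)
Before assessing the proposal on its merits, note a point of context: the present paper does not prove Theorem~\ref{BLvW-main} at all. It is background, recalled verbatim from \cite[Theorem 3.5, Equation (3.5)]{BLvW}, and the proof there is of a completely different nature from what you outline: the coefficients $C^{\alpha}_{\delta\beta}$ arise as structure constants for multiplication of noncommutative Schur functions in $\Nsym$, namely $\ncs_{\delta}\ncs_{\beta}=\sum_{\alpha}C^{\alpha}_{\delta\beta}\ncs_{\alpha}$, the expansion of $\qs_{\alpha\cskew\beta}$ then follows by Hopf duality (the skew functions are defined through the coproduct), and the combinatorial description \eqref{eq:nclr definition} is extracted from the Pieri rules and a careful analysis of Mason's insertion and the bijection $\rhoc_{\beta}$. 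So your proposal must be judged as a self-contained proof attempt, not as a variant of an argument in this paper.

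Judged that way, it has a genuine gap at exactly the point you yourself flag as ``the main obstacle,'' your fact (ii): that for every straight shape $\delta$ the fibers $\rect^{-1}(\sigma)\cap\SRCT(\alpha\cskew\beta)$ have a common cardinality as $\sigma$ ranges over $\SRCT(\delta)$. You propose to obtain this from a dual equivalence on reverse composition tableaux, ``developed or imported.'' There is nothing to import: no dual equivalence or jeu de taquin theory exists for reverse composition tableaux (taquin-type slides do not preserve the triple condition or the composition shape), and this absence is precisely why \cite{BLvW} proves the theorem algebraically through $\Nsym$ rather than by the classical template you follow. Your fallback, transporting classical dual equivalence through $\rhoc_{\beta}$, also fails: $\rhoc_{\beta}^{-1}$ preserves columns but not outer shapes, so by \cite[Proposition 3.1]{mason-1} it carries the classical fiber $\{T\in\SRT(\lambda/\partitionof{\beta})\suchthat\rect(T)=\rT\}$ onto $\{\tau\in\bigcup_{\partitionof{\gamma}=\lambda}\SRCT(\gamma\cskew\beta)\suchthat\rect(\tau)=\rhoc_{\varnothing}^{-1}(\rT)\}$, a set scattered over \emph{all} outer shapes $\gamma$ with $\partitionof{\gamma}=\lambda$. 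Classical dual equivalence therefore yields constancy only of these aggregated fibers, not of the single-shape fiber for $\alpha\cskew\beta$ that you need. This is not a cosmetic difficulty: the entire technical content of Sections 3 and 4 of the present paper (Lemma~\ref{lem:rect}, Proposition~\ref{prop:LRcoeff}, and their right-hand analogues) is a workaround for exactly this aggregation problem, and even with Theorem~\ref{BLvW-main} already in hand the authors can isolate single-shape fiber sizes only when $\delta$ is a partition or the reverse of a partition, via a counting argument playing the classical rule off against Lemma~\ref{lem:skewsasskewqs}. Your fact (i), descent preservation under Mason's rectification, is also not the routine verification you suggest (it is itself a nontrivial result implicit in \cite{BLvW}), but it is fact (ii) that your proposal cannot supply and that no existing theory supplies in the form you require.
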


\begin{example}\label{ex:noncommutativeLR example}
We compute that $C_{(1,2)(3,2)}^{(1,4,3)}=2$ by verifying that only the two $\SRCT$s shown below rectify to $\tau_{(1,2)}$.
\begin{align*}
\tableau{1\\ \gst & \gst &\gst &2\\ \gst & \gst &3}
\quad
\tableau{3\\ \gst & \gst & \gst & 2\\ \gst & \gst & 1}
\end{align*}
Thus, $\qs_{(1,4,3)\cskew (3,2)}= \cdots + 2\qs_{(1,2)}+\cdots$.
\end{example}
We also say a function is \emph{quasisymmetric Schur positive} if it can be written as a nonnegative sum of quasisymmetric Schur functions.

Next, we will establish some small but useful results relating symmetric and quasisymmetric functions.
\begin{lemma}\label{lem:sym+qs=schurpositive} If a quasisymmetric function is symmetric and quasisymmetric Schur positive, then it is Schur positive.
\end{lemma}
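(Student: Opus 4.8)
The plan is to exploit the fact, recorded in the excerpt, that Schur functions expand in the quasisymmetric Schur basis through the refinement relation $s_{\lambda} = \sum_{\partitionof{\alpha} = \lambda} \qs_{\alpha}$, together with the uniqueness of expansions in a basis. Let $f$ denote the given function. Since $f$ is symmetric and the set $\{s_{\lambda} \suchthat \lambda \vdash n\}$ is a basis of $\Sym^n$, I would first write $f = \sum_{\lambda} c_{\lambda} s_{\lambda}$ for uniquely determined scalars $c_{\lambda} \in \bC$; the entire task is to show each $c_{\lambda} \geq 0$.

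Next I would substitute the refinement relation into this expansion and regroup. Doing so expresses $f$ in the quasisymmetric Schur basis as
\begin{align*}
f = \sum_{\lambda} c_{\lambda} \sum_{\partitionof{\alpha} = \lambda} \qs_{\alpha} = \sum_{\alpha} c_{\partitionof{\alpha}} \, \qs_{\alpha}.
\end{align*}
On the other hand, the hypothesis that $f$ is quasisymmetric Schur positive gives a second expansion $f = \sum_{\alpha} d_{\alpha} \qs_{\alpha}$ in which every $d_{\alpha}$ is a nonnegative integer (or at least nonnegative real). Because $\{\qs_{\alpha} \suchthat \alpha \vDash n\}$ spans, and in fact is a basis of, $\Qsym^n$, the coefficients in an expansion along the quasisymmetric Schur basis are unique.

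Comparing the two expansions coefficient by coefficient then yields $c_{\partitionof{\alpha}} = d_{\alpha}$ for every composition $\alpha$. Specializing to the case where $\alpha = \lambda$ is itself a partition, so that $\partitionof{\alpha} = \lambda$, I obtain $c_{\lambda} = d_{\lambda} \geq 0$ for every partition $\lambda$. Hence $f = \sum_{\lambda} c_{\lambda} s_{\lambda}$ is a nonnegative linear combination of Schur functions, i.e.\ $f$ is Schur positive, as desired.

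I do not anticipate a genuine obstacle here: the argument is a short formal manipulation whose only nontrivial ingredients are the already-stated refinement $s_{\lambda} = \sum_{\partitionof{\alpha}=\lambda}\qs_{\alpha}$ and the uniqueness of basis expansions in $\Qsym$. The one point worth stating carefully is that symmetry forces $d_{\alpha} = c_{\partitionof{\alpha}}$ to depend only on $\partitionof{\alpha}$, so that in particular the value $d_{\lambda}$ attached to the partition $\lambda$ (regarded as a composition) recovers exactly the Schur coefficient $c_{\lambda}$.
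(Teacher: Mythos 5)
Your proposal is correct and takes essentially the same approach as the paper: the paper's entire proof is the single line that the claim follows since $s_\lambda = \sum_{\partitionof{\alpha}=\lambda}\qs_\alpha$, and your argument is precisely the detailed fleshing-out of that observation, substituting the refinement into the Schur expansion and comparing coefficients in the quasisymmetric Schur basis. No gaps; the uniqueness you invoke is justified because the $\qs_\alpha$ indeed form a basis of $\Qsym^n$.
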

\begin{proof} This follows since $s_\lambda = \sum _{\partitionof{\alpha} = \lambda}\qs _\alpha$.
\end{proof}

\begin{corollary}\label{cor:skewqsschurpositive} If a skew quasisymmetric Schur function is symmetric then it is Schur positive.
\end{corollary}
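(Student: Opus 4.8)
The plan is to observe that this corollary is an immediate consequence of the two preceding results, obtained by chaining them together. First I would invoke Theorem~\ref{BLvW-main}, which states that every skew quasisymmetric Schur function admits an expansion
\begin{align*}
\qs_{\alpha\cskew\beta}=\sum_{\delta\vDash |\alpha\cskew\beta|}C_{\delta\beta}^{\alpha}\qs_{\delta}
\end{align*}
in which the coefficients $C_{\delta\beta}^{\alpha}$ are nonnegative integers. In the terminology just introduced, this says precisely that $\qs_{\alpha\cskew\beta}$ is quasisymmetric Schur positive. Thus the quasisymmetric Schur positivity hypothesis of Lemma~\ref{lem:sym+qs=schurpositive} holds automatically for \emph{any} skew quasisymmetric Schur function, with no extra assumption.

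Next, under the standing hypothesis that $\qs_{\alpha\cskew\beta}$ is symmetric, I would feed this function directly into Lemma~\ref{lem:sym+qs=schurpositive}. That lemma promotes any function that is simultaneously symmetric and quasisymmetric Schur positive to a Schur positive function, using the refinement identity $s_\lambda = \sum_{\partitionof{\alpha}=\lambda}\qs_\alpha$. Combining the two inputs---quasisymmetric Schur positivity from Theorem~\ref{BLvW-main} and symmetry from the hypothesis---yields that $\qs_{\alpha\cskew\beta}$ is Schur positive, which is exactly the claim.

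There is no genuine obstacle in this argument: the corollary contains essentially no new content beyond recognizing that the phrase ``skew quasisymmetric Schur function'' already guarantees the quasisymmetric Schur positivity condition required by the lemma. The only point worth stating explicitly is that this positivity is supplied by Theorem~\ref{BLvW-main} rather than being an additional hypothesis, so that symmetry alone suffices to trigger the conclusion.
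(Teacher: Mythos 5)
Your proposal is correct and follows exactly the paper's own argument: Theorem~\ref{BLvW-main} supplies quasisymmetric Schur positivity, and Lemma~\ref{lem:sym+qs=schurpositive} then upgrades symmetry plus quasisymmetric Schur positivity to Schur positivity. The paper states this more tersely, but the content is identical.
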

\begin{proof} This follows immediately from Theorem~\ref{BLvW-main}, which shows that every skew quasisymmetric Schur function is quasisymmetric Schur positive.
\end{proof}

The above corollary immediately begs the question of whether a combinatorial formula exists for the coefficients appearing in the Schur function expansion of a symmetric skew quasisymmetric Schur function. In answer, we will give explicit combinatorial interpretations for these, not only positive but also integral, coefficients in Theorem~\ref{the:NCLR} and Theorem~\ref{the:NCLRR}.

\begin{lemma}\label{lem:skewsasskewqs} Let $\beta$ be a composition such that $\partitionof{\beta}=\mu$, for some partition $\mu$. Let $\lambda$ be a partition. Then
$$s_{\lambda /\mu}=\sum _{\partitionof{\alpha}=\lambda}\qs _{\alpha \cskew \beta}.$$
\end{lemma}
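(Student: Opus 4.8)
The plan is to prove the identity by reading both sides as generating functions over tableaux and matching monomials through the content-preserving bijection $\rhoc_\beta$ recalled above. By definition $\qs_{\alpha\cskew\beta}=\sum_{\tau\in\SSRCT(\alpha\cskew\beta)}\mathbf{x}^{\tau}$ and $s_{\lambda/\mu}=\sum_{T\in\SSRT(\lambda/\mu)}\mathbf{x}^{T}$, so it suffices to exhibit a content-preserving bijection
$$\Phi:\ \bigsqcup_{\partitionof{\alpha}=\lambda}\SSRCT(\alpha\cskew\beta)\ \longrightarrow\ \SSRT(\lambda/\mu);$$
matching $\mathbf{x}^{\tau}=\mathbf{x}^{\Phi(\tau)}$ summand by summand then yields $\sum_{\partitionof{\alpha}=\lambda}\qs_{\alpha\cskew\beta}=s_{\lambda/\mu}$.

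First I would take $\Phi$ to be the restriction of $\rhoc_\beta$. Since $\rhoc_\beta$ merely reorders the entries within each column and then top-justifies the resulting columns on $\mu=\partitionof{\beta}$, it alters neither the multiset of entries nor the content of a tableau; hence $\mathbf{x}^{\tau}=\mathbf{x}^{\rhoc_\beta(\tau)}$ and content-preservation is immediate. What then remains is to check that $\rhoc_\beta$ restricts to the asserted bijection, and for this I would verify a single shape statement: if $\tau\in\SSRCT(\alpha\cskew\beta)$ then $\rhoc_\beta(\tau)\in\SSRT(\partitionof{\alpha}/\mu)$, so that $\rhoc_\beta(\tau)$ has outer shape $\lambda$ exactly when $\partitionof{\alpha}=\lambda$.

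The shape statement reduces to a bookkeeping of column lengths, since $\rhoc_\beta$ operates column by column. The number of skew cells in column $j$ of $\rhoc_\beta(\tau)$ equals the number of skew cells in column $j$ of $\tau$, namely $\#\{i:\alpha_i\ge j\}-\#\{i:\beta_i\ge j\}$; here one uses that $\beta$ is drawn bottom-justified inside $\alpha$ and that $\beta<_{c}\alpha$ guarantees each inner cell of column $j$ sits strictly below an outer cell, so that the inner cells of column $j$ number exactly $\#\{i:\beta_i\ge j\}$. As $\#\{i:\alpha_i\ge j\}$ and $\#\{i:\beta_i\ge j\}$ are the lengths of column $j$ in $\partitionof{\alpha}$ and in $\partitionof{\beta}=\mu$, the skew column lengths of $\rhoc_\beta(\tau)$ coincide with those of $\partitionof{\alpha}/\mu$; combined with the fact, established in \cite[Chapter 4]{QSbook}, that $\rhoc_\beta(\tau)$ is a genuine $\SSRT$, this gives $\rhoc_\beta(\tau)\in\SSRT(\partitionof{\alpha}/\mu)$. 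Bijectivity of $\Phi$ onto $\SSRT(\lambda/\mu)$ is then formal: the sets $\SSRCT(\alpha\cskew\beta)$ are disjoint since they consist of fillings of distinct shapes, and for $T\in\SSRT(\lambda/\mu)$ the inverse $\rhoc_\beta^{-1}(T)$ from \cite[Chapter 4]{QSbook} is an $\SSRCT$ of some shape $\alpha\cskew\beta$ whose outer shape, by the computation just made, satisfies $\partitionof{\alpha}=\lambda$, placing it in the domain of $\Phi$.

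I expect the main obstacle to be keeping this shape bookkeeping honest in the reverse, bottom-justified convention for the inner shape $\beta$: one must confirm that $\beta<_{c}\alpha$ genuinely positions the inner cells so that the column count $\#\{i:\alpha_i\ge j\}-\#\{i:\beta_i\ge j\}$ is valid, and that after top-justifying on $\mu$ these lengths reassemble into $\partitionof{\alpha}/\mu$ rather than some other skew shape. Once this is reconciled, content-preservation and bijectivity are immediate from the properties of $\rhoc_\beta$ recorded in the excerpt, and the termwise monomial matching completes the proof.
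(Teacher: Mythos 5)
Your proof is correct, but it takes a different route from the paper: the paper's entire proof is a one-line citation of \cite[Proposition 2.17]{BLvW}, whereas you essentially reprove that proposition from scratch. Your argument is sound: $\rhoc_\beta$ rearranges entries within columns and so preserves content, and the column-count computation (skew cells in column $j$ number $\#\{i:\alpha_i\ge j\}-\#\{i:\beta_i\ge j\}$, with the inner cells of $\beta$ occupying the bottom rows since $\beta<_{c}\alpha$ places each row of $\beta$ inside the corresponding bottom row of $\alpha$) correctly shows that $\rhoc_\beta$ carries $\SSRCT(\alpha\cskew\beta)$ into $\SSRT(\partitionof{\alpha}/\mu)$ and, via $\rhoc_\beta^{-1}$, that every $T\in\SSRT(\lambda/\mu)$ arises from exactly one $\alpha$ with $\partitionof{\alpha}=\lambda$; summing monomials over the resulting bijection gives the identity. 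What your approach buys is self-containedness: the lemma follows purely from the properties of $\rhoc_\beta$ already recorded in the background section, rather than from an external result whose proof the reader must chase. What it costs is that you silently lean on the cited fact that $\rhoc_\beta$ and $\rhoc_\beta^{-1}$ are well-defined mutually inverse maps on these tableau sets (from \cite[Chapter 4]{QSbook}); that is legitimate here since the paper itself invokes this, but it means your proof is not shorter than the citation it replaces, only more transparent.
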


\begin{proof}
This is a
consequence of \cite[Proposition 2.17]{BLvW}.
\end{proof}

\section{A left Littlewood-Richardson rule for symmetric skew quasisymmetric Schur functions}\label{sec:left}
Our aim in this section is to give a left Littlewood-Richardson rule for symmetric skew quasisymmetric Schur functions. We start by reminding the reader of a version of the Littlewood-Richardson rule before we state our own rule that resembles it. For this we need to define left Littlewood-Richardson reverse tableaux.

Given a skew shape $\lambda/\mu$, let $\LRRTL(\lambda/\mu)$ denote the set of all $\SSRT$s of shape $\lambda/\mu$ that have been constructed as follows. Given a partition $\nu = (\nu_1,\ldots,\nu_{\ell(\nu)})$, using the cover relations of $\Ly$ to place cells on the inner shape~$\mu$, place $\nu_{\ell(\nu)}$ cells containing $\ell(\nu)$ from left to right, $\nu _{\ell(\nu)-1} $ cells containing $\ell(\nu)-1$ from left to right, $\ldots$, $\nu_1$ cells containing $1$ from left to right such that the $k$-th $i$ from the \emph{left} is weakly \emph{left} of the $k$-th $i+1$ from the \emph{left}. Whenever the resulting $\SSRT$ is of shape~$\lambda/\mu$, this is an element in our set $\LRRTL(\lambda/\mu)$.
The elements of $\LRRTL(\lambda/\mu)$ are referred to as \emph{left Littlewood-Richardson reverse tableaux} of \emph{shape} $\lambda/\mu$.

Now recall the following classical Littlewood-Richardson rule for reverse tableaux (for example, see \cite[Theorem 2.4]{Kreiman}).
\begin{theorem}\label{the:LR}
Given $\lambda/\mu$ and a partition $\nu$, the coefficient of $s_{\nu}$ in $s_{\lambda/\mu}$ is equal to the cardinality of the set
\begin{align*}
\LRRTL_\nu(\lambda/\mu)=\{T\in \LRRTL(\lambda/\mu) \suchthat \cont (T)=\nu\}.
\end{align*}
\end{theorem}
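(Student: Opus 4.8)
The plan is to prove Theorem~\ref{the:LR} by analyzing the rectification map on semistandard reverse tableaux, following the jeu de taquin strategy of Sch\"utzenberger adapted to the reverse conventions of this section. Throughout I write $c^\lambda_{\nu\mu}$ for the structure constants defined by $s_{\lambda/\mu}=\sum_\nu c^\lambda_{\nu\mu}s_\nu$, and the goal is to identify $c^\lambda_{\nu\mu}$ with $|\LRRTL_\nu(\lambda/\mu)|$. First I would record two basic facts about rectification of $\SSRT$s, obtained from the $\SRT$ rectification already described via standardization (which commutes with $\rho_\beta$ and with slides): rectification is well defined, meaning the output does not depend on the order in which cells are moved, and it preserves content, since each individual slide only relocates an entry. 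The well-definedness (confluence) is the genuinely hard input and is the analogue of the classical jeu de taquin theorem; I would establish it through Knuth/plactic equivalence of reading words, showing that the reading word of any $T$ and of $\rect(T)$ lie in the same Knuth class and that each Knuth class contains a unique straight-shape tableau.

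Next I would decompose $s_{\lambda/\mu}$ according to rectification class. Because rectification preserves content, $\mathbf{x}^T=\mathbf{x}^{\rect(T)}$, so
\begin{align*}
s_{\lambda/\mu}=\sum_{T\in\SSRT(\lambda/\mu)}\mathbf{x}^T=\sum_{U}\bigl|\{T:\rect(T)=U\}\bigr|\,\mathbf{x}^{U},
\end{align*}
the inner sum ranging over straight-shape targets $U$. The crux is then to show that $\bigl|\{T\in\SSRT(\lambda/\mu):\rect(T)=U\}\bigr|$ depends only on the shape $\nu$ of $U$ and not on $U$ itself; granting this, writing the common value as $c_\nu$ and using $\sum_{U\text{ of shape }\nu}\mathbf{x}^U=s_\nu$ gives $s_{\lambda/\mu}=\sum_\nu c_\nu s_\nu$, whence $c_\nu=c^\lambda_{\nu\mu}$. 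This shape-invariance is the second deep ingredient: it is where dual equivalence, connecting any two fillings of a fixed straight shape by reverse slides, does the work. I expect this shape-invariance, together with the confluence of the first paragraph, to be the main obstacle, as these are precisely the substantive content of the classical theory.

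Finally I would specialize $U$ to the unique $\SSRT$ of straight shape $\nu$ and content $\nu$, the highest-weight reverse tableau (its existence and uniqueness being the reverse form of $K_{\nu\nu}=1$). By content-preservation, every $T$ with $\rect(T)=U$ automatically satisfies $\cont(T)=\nu$. It then remains to prove the purely combinatorial equivalence that a content-$\nu$ tableau $T$ of shape $\lambda/\mu$ rectifies to this highest-weight tableau precisely when $T\in\LRRTL(\lambda/\mu)$, that is, precisely when the positional lattice condition ``the $k$-th $i$ from the left is weakly left of the $k$-th $i+1$ from the left'' holds. I would verify this by checking that the lattice property of the reading word is invariant under the Knuth moves generated by the reverse insertion, and that among straight shapes only the highest-weight tableau has a lattice reading word; combined with the rectification-class count of the previous paragraph this yields $c^\lambda_{\nu\mu}=|\LRRTL_\nu(\lambda/\mu)|$.

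As an alternative that sidesteps some of the semistandard bookkeeping, one could apply the entry-complementation bijection $e\mapsto N+1-e$ to pass between reverse tableaux and ordinary column-strict tableaux, thereby deducing the statement from the standard form of the rule after tracking how the lattice condition and the content transform. I prefer the intrinsic rectification argument sketched above, however, since it reuses the insertion and standardization machinery already introduced in this section and makes the role of the canonical reverse tableau transparent.
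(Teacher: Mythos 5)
The first thing to note is that the paper does not prove Theorem~\ref{the:LR} at all: it is recalled as the classical Littlewood--Richardson rule for reverse tableaux, with a pointer to the literature (\cite[Theorem 2.4]{Kreiman}), and it serves purely as an input to the paper's actual results (Theorems~\ref{the:NCLR} and~\ref{the:NCLRR}). So your proposal undertakes something the paper deliberately outsources. The route you sketch --- decompose $s_{\lambda/\mu}$ over rectification classes, show the class size depends only on the rectified shape, then identify the class of the highest-weight reverse tableau with the lattice-condition set --- is indeed the standard Sch\"utzenberger argument, and its architecture is sound. What the paper's citation buys is economy: the theorem is classical and the paper's contribution lies elsewhere. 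What your approach would buy, if completed, is a self-contained treatment that reuses the insertion, standardization and canonical-tableau machinery appearing later in the paper.

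That said, as written your text is a plan rather than a proof. The three pillars you lean on --- confluence of jeu de taquin together with uniqueness of the straight-shape representative in each Knuth class, shape-invariance of rectification-class sizes via dual equivalence, and Knuth-invariance of the lattice property --- are each invoked but not established, and they are precisely the substantive content of the theorem; a reader could not check any of them from what you wrote. There are also two convention mismatches you would need to resolve. First, the rectification in this paper is defined by an insertion algorithm, not by slides, so you need either the (nontrivial) equivalence of insertion-based rectification with jeu de taquin, or a consistent commitment to one notion throughout. Second, the set $\LRRTL(\lambda/\mu)$ is defined by a positional condition (the $k$-th $i$ from the left weakly left of the $k$-th $i+1$ from the left, with cells placed along cover relations of $\Ly$), not by a reading-word condition, so a translation lemma is needed before your Knuth-invariance step applies. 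Your closing alternative --- complementing entries $e\mapsto N+1-e$ to reduce to the usual column-strict form of the rule --- is in fact the most economical correct route if one is willing to quote the standard rule, and it is closest in spirit to what the paper does by citing \cite{Kreiman}.
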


An analogous rule holds for reverse composition tableaux, which we will state after establishing some more notation. Given a skew reverse composition shape $\alpha\cskew \beta$, let $\LRRCTL(\alpha\cskew\beta)$ denote the set of all $\SSRCT$s of shape $\alpha\cskew\beta$ that have been constructed as follows. Given a partition $\nu=(\nu_1,\ldots,\nu_{\ell(\nu)})$, using the cover relations of $\Lc$ to place cells on the inner shape~$\beta$, place $\nu_{\ell(\nu)}$ cells containing $\ell(\nu)$ from left to right, $\nu _{\ell(\nu)-1} $ cells containing $\ell(\nu)-1$ from left to right,$\ldots$, $\nu_1$ cells containing $1$ from left to right such that the $k$-th $i$ from the \emph{left} is weakly \emph{left} of the $k$-th $i+1$ from the \emph{left}.
Whenever the resulting $\SSRCT$ is of shape $\alpha\cskew\beta$, this is an element in our set $\LRRCTL(\alpha\cskew\beta)$. We will refer to the elements of $\LRRCTL(\alpha\cskew\beta)$ as \emph{left Littlewood-Richardson reverse composition tableaux} of \emph{shape} $\alpha\cskew\beta$.

Now we can state the left Littlewood-Richardson rule for symmetric skew quasisymmetric Schur functions; we will prove this at the end of the section. The reader should compare the statement in the following theorem with that of Theorem~\ref{the:LR}.

\begin{theorem}\label{the:NCLR} Given $\alpha \cskew \beta$ such that $\qs _{\alpha \cskew \beta}$ is symmetric, and a partition $\nu$, the coefficient of $s_{\nu}$ in $\qs_{\alpha\cskew \beta}$ is equal to the cardinality of the set
\begin{align*}
\LRRCTL_\nu(\alpha\cskew \beta)= \{\tau\in \LRRCTL(\alpha\cskew \beta)\suchthat \cont (\tau)=\nu\}.
\end{align*}
\end{theorem}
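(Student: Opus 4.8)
The plan is to reduce the statement to the noncommutative Littlewood-Richardson rule of Theorem~\ref{BLvW-main}: I will first use the symmetry hypothesis to pin down the Schur coefficients as certain noncommutative Littlewood-Richardson coefficients, and then match the resulting rectification count against the cardinality of $\LRRCTL_\nu(\alpha\cskew\beta)$. To begin, $\qs_{\alpha\cskew\beta}$ is symmetric by hypothesis and quasisymmetric Schur positive by Theorem~\ref{BLvW-main}, so Lemma~\ref{lem:sym+qs=schurpositive} (equivalently Corollary~\ref{cor:skewqsschurpositive}) lets me write $\qs_{\alpha\cskew\beta}=\sum_{\nu}d_\nu s_\nu$ with each $d_\nu$ a nonnegative integer; the goal is $d_\nu=|\LRRCTL_\nu(\alpha\cskew\beta)|$.

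Next I would extract $d_\nu$ explicitly. Substituting $s_\nu=\sum_{\partitionof{\gamma}=\nu}\qs_\gamma$ into the Schur expansion gives
\[
\qs_{\alpha\cskew\beta}=\sum_\gamma d_{\partitionof{\gamma}}\,\qs_\gamma,
\]
while Theorem~\ref{BLvW-main} gives $\qs_{\alpha\cskew\beta}=\sum_\delta C^{\alpha}_{\delta\beta}\qs_\delta$. Since the quasisymmetric Schur functions form a basis of $\Qsym$, matching coefficients yields $C^{\alpha}_{\gamma\beta}=d_{\partitionof{\gamma}}$ for every composition $\gamma$; taking $\gamma=\nu$ (a partition, so $\partitionof{\nu}=\nu$) gives $d_\nu=C^{\alpha}_{\nu\beta}$. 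By the combinatorial description in Theorem~\ref{BLvW-main} this means
\[
d_\nu=|\{\tau\in\SRCT(\alpha\cskew\beta)\suchthat \rect(\tau)=\tau_\nu\}|.
\]
This step is where symmetry is essential: without it the coefficients $C^{\alpha}_{\gamma\beta}$ need not be constant on rearrangement classes, so there is no single Schur coefficient to read off.

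It therefore remains to prove the purely combinatorial identity $|\{\tau\in\SRCT(\alpha\cskew\beta)\suchthat \rect(\tau)=\tau_\nu\}|=|\LRRCTL_\nu(\alpha\cskew\beta)|$, the noncommutative analogue of the classical fact that counting Littlewood-Richardson fillings agrees with counting standard tableaux rectifying to the superstandard tableau; I expect this to be the main obstacle, and it is best isolated as a separate lemma requiring no symmetry. My plan is to show that standardization restricts to a bijection from $\LRRCTL_\nu(\alpha\cskew\beta)$ onto $\{\tau\in\SRCT(\alpha\cskew\beta)\suchthat \rect(\tau)=\tau_\nu\}$. In one direction I would verify that if $\tau$ is a left Littlewood-Richardson reverse composition tableau of content $\nu$ then $\rect(\stan(\tau))=\tau_\nu$, using that $\rect$ commutes with $\stan$ and that the lattice-type construction forces the rectified shape to be $\nu$ with its canonical filling. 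In the other direction I would destandardize: given $\sigma$ with $\rect(\sigma)=\tau_\nu$, the rectification records for each cell of $\sigma$ the row of $\tau_\nu$ into which it lands, and relabelling the entries of $\sigma$ by these rows should produce a content-$\nu$ filling whose standardization is $\sigma$. The technical heart is showing that this relabelling always yields a valid $\SSRCT$ satisfying the triple condition and lying in $\LRRCTL_\nu(\alpha\cskew\beta)$, and that the two maps are mutually inverse; I would mirror the classical argument underlying Theorem~\ref{the:LR}, carried out intrinsically for reverse composition tableaux. I emphasize that I cannot simply transport the classical statement through $\rhoc_\beta$: although $\rhoc_\beta$ is a content-preserving bijection $\SSRCT(\alpha\cskew\beta)\to\SSRT(\partitionof{\alpha}/\partitionof{\beta})$ commuting with $\stan$, for a fixed outer shape it respects neither $\rect$ nor the left Littlewood-Richardson structure, since otherwise one would obtain the false identity $\qs_{\alpha\cskew\beta}=s_{\partitionof{\alpha}/\partitionof{\beta}}$ in the symmetric case.
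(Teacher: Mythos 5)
Your opening reduction is correct and is exactly how the paper passes from its enumerative result to the theorem: symmetry plus the fact that distinct Schur functions have disjoint supports in the quasisymmetric Schur basis gives $d_\nu=C^{\alpha}_{\nu\beta}=|\{\tau\in\SRCT(\alpha\cskew\beta)\suchthat\rect(\tau)=\tau_\nu\}|$. The problem is everything after that. The identity $C^{\alpha}_{\nu\beta}=|\LRRCTL_\nu(\alpha\cskew\beta)|$ is the actual content of the theorem (Proposition~\ref{prop:LRcoeff} in the paper), and your proposal does not prove it. Of the bijection you describe, the forward half (that $\tau\in\LRRCTL_\nu(\alpha\cskew\beta)$ implies $\rect(\stan(\tau))=\tau_\nu$) is the paper's Lemma~\ref{lem:rect}, proved there by induction on the insertion steps: each entry $j$ is either placed in the leftmost column (when $j\in\{\nu_1,\nu_1+\nu_2,\ldots\}$) or immediately to the right of $j+1$; you only gesture at this. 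The backward half --- destandardizing an arbitrary $\tau\in\SRCT(\alpha\cskew\beta)$ with $\rect(\tau)=\tau_\nu$ --- you yourself flag as ``the technical heart'' and never execute. That statement is precisely the paper's Corollary~\ref{cor:stbij}, but there it is a \emph{consequence} of the enumerative result, not a route to it; proving it directly would require recording/inversion properties of Mason's insertion algorithm that your outline does not supply. So the proof has a genuine hole exactly where the difficulty lies.

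Moreover, your stated reason for refusing to transport the classical rule through $\rhoc_\beta$ rests on a false premise: $\rhoc_\beta$ is \emph{not} a content-preserving bijection $\SSRCT(\alpha\cskew\beta)\to\SSRT(\partitionof{\alpha}/\partitionof{\beta})$ for a fixed outer shape $\alpha$ --- your own reductio ($\qs_{\alpha\cskew\beta}=s_{\partitionof{\alpha}/\partitionof{\beta}}$ would follow) disproves that premise, not merely its compatibility with $\rect$. Rather, $\rhoc_\beta$ bijects the union $\bigcup_{\partitionof{\gamma}=\partitionof{\alpha}}\SSRCT(\gamma\cskew\beta)$ with $\SSRT(\partitionof{\alpha}/\partitionof{\beta})$, and this union-level statement is exactly what the paper exploits to avoid destandardization entirely. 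Writing $\lambda=\partitionof{\alpha}$, $\mu=\partitionof{\beta}$: the map $\rhoc^{-1}_\beta$ restricts to a bijection $\LRRTL_\nu(\lambda/\mu)\to\bigcup_{\partitionof{\gamma}=\lambda}\LRRCTL_\nu(\gamma\cskew\beta)$, so by Theorem~\ref{the:LR} we get $\sum_{\partitionof{\gamma}=\lambda}|\LRRCTL_\nu(\gamma\cskew\beta)|=c^{\lambda}_{\nu\mu}$; on the other hand Lemma~\ref{lem:skewsasskewqs} gives $s_{\lambda/\mu}=\sum_{\partitionof{\gamma}=\lambda}\qs_{\gamma\cskew\beta}$, and extracting the coefficient of $\qs_\nu$ yields $c^{\lambda}_{\nu\mu}=\sum_{\partitionof{\gamma}=\lambda}C^{\gamma}_{\nu\beta}$. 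Since Lemma~\ref{lem:rect} together with injectivity of standardization gives the termwise inequality $|\LRRCTL_\nu(\gamma\cskew\beta)|\le C^{\gamma}_{\nu\beta}$, equality of the two totals forces equality for every $\gamma$, in particular for $\gamma=\alpha$. To repair your argument, either adopt this counting step or genuinely carry out the inverse construction; the former is short given the lemmas already available, while the latter is a substantial project in its own right.
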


\begin{remark} \label{rem:classicalLR}
Notice that if $\alpha$ and $\beta$ are partitions of the same length then
$\alpha/\beta=\alpha\cskew \beta$ and we recover the classical Littlewood-Richardson rule, as the cover relations on $\Ly$ and $\Lc$ are identical since nothing is added to the leftmost column.
\end{remark}
The two examples that follow illustrate the expansion into Schur functions of two symmetric skew quasisymmetric Schur functions using Theorem~\ref{the:NCLR}.
\begin{example}\label{ex:newLR} We obtain
$$\qs _{(9,3,2,4)\cskew (4,1,1,3)} = s_{(7,1,1)} +s_{(7,2)}+ 2s_{(6,2,1)} +s_{(6,1,1,1)}+ s_{(5,2,1,1)} +s_{(5,2,2)}$$ from the
(complete list of) elements of $\LRRCTL((9,3,2,4)\cskew (4,1,1,3))$ below.
$$\tableau{\bullet&\bullet&\bullet&\bullet&1&1&1&1&1\\
\bullet&2&1\\
\bullet&1\\
\bullet&\bullet&\bullet&3}\quad
\tableau{\bullet&\bullet&\bullet&\bullet&1&1&1&1&1\\
\bullet&2&1\\
\bullet&1\\
\bullet&\bullet&\bullet&2}
$$
$$
\tableau{\bullet&\bullet&\bullet&\bullet&2&1&1&1&1\\
\bullet&2&1\\
\bullet&1\\
\bullet&\bullet&\bullet&3}\quad
\tableau{\bullet&\bullet&\bullet&\bullet&3&1&1&1&1\\
\bullet&2&1\\
\bullet&1\\
\bullet&\bullet&\bullet&2}
$$
$$
\tableau{\bullet&\bullet&\bullet&\bullet&4&1&1&1&1\\
\bullet&2&1\\
\bullet&1\\
\bullet&\bullet&\bullet&3}\quad
\tableau{\bullet&\bullet&\bullet&\bullet&4&2&1&1&1\\
\bullet&2&1\\
\bullet&1\\
\bullet&\bullet&\bullet&3}
$$
$$
\tableau{\bullet&\bullet&\bullet&\bullet&3&3&1&1&1\\
\bullet&2&1\\
\bullet&1\\
\bullet&\bullet&\bullet&2}
$$
\end{example}

\begin{example}\label{ex:newLR2} We obtain $$\qs _{(2,3,3,2)\cskew (1,2,1)} = s_{(2,2,1,1)} +s_{(2,2,2)}$$ from the (complete list of)
elements of $\LRRCTL((2,3,3,2)\cskew (1,2,1))$ below.
$$\tableau{
1&1\\
\bullet&3&2\\
\bullet&\bullet& 4\\
\bullet& 2}\qquad
\tableau{
1&1\\
\bullet&3&3\\
\bullet&\bullet& 2\\
\bullet& 2}$$
\end{example}
We will spend the remainder of this section establishing Theorem~\ref{the:NCLR}. Henceforth in this section, fix compositions $\alpha$ and $\beta$ such that $\beta <_{c} \alpha$. Suppose further that $\partitionof{\alpha}=\lambda, \partitionof{\beta}=\mu$.
Then observe that the bijection $\rho^{-1}_\beta$ restricts to a bijection
\begin{align*}
\rhoc ^{-1} _\beta :
\LRRTL(\lambda /\mu) \to  \bigcup _{\partitionof{\gamma }= \lambda}\LRRCTL(\gamma \cskew \beta )
\end{align*}
and hence we have a set partition
\begin{align*}\label{eq:unions}
\rhoc ^{-1} _\beta(\LRRTL(\lambda /\mu)) =
\LRRCTL(\alpha \cskew \beta ) \cup \bigcup _{\partitionof{\gamma }= \lambda \atop \gamma \neq \alpha}\LRRCTL(\gamma \cskew \beta ).
\end{align*}
We will establish that only the elements of $\LRRCTL(\alpha\cskew\beta)$ contribute to the quasisymmetric Schur function expansion of $\qs_{\alpha\cskew\beta}$ in a sense that will become precise once we prove the following lemma.

\begin{lemma}\label{lem:rect}Let $\nu$ be a partition and $\rtau \in \LRRCTL_\nu(\alpha \cskew \beta)$. Then $\rect(\stan(\rtau))=\rtau _\nu$.
\end{lemma}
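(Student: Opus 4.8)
The plan is to show that any left Littlewood-Richardson reverse composition tableau $\rtau$ of content $\nu$ rectifies (after standardization) to the canonical SRCT $\rtau_\nu$. The natural strategy is to transport the problem to the world of $\SSRT$s via the bijection $\rhoc_\beta$, because standardization commutes with $\rho_\beta$ (as noted after Example~\ref{ex:standardize}) and because the rectification of $\SRCT$s is designed to mirror the rectification of $\SRT$s. So first I would set $T=\rhoc_\beta(\rtau)$ and observe that $T\in\LRRTL(\lambda/\mu)$ with $\cont(T)=\nu$, using the restriction of $\rho_\beta^{-1}$ to $\LRRTL(\lambda/\mu)$ that was just established before the lemma statement.

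\emph{The classical fact to invoke.}
The key classical input is that every left Littlewood-Richardson reverse tableau of content $\nu$ rectifies to the canonical reverse tableau $T_\nu$. This is exactly the statement that the LR filling is the \emph{unique} reverse tableau in its plactic/jeu-de-taquin class whose content is a partition; equivalently, a reverse tableau is lattice (Yamanouchi) precisely when its rectification is $T_\nu$. I would cite this as the $\SRT$ analogue of Theorem~\ref{the:LR} applied to the straight shape $\nu$, since the coefficient of $s_\nu$ in $s_\nu$ is $1$ and is witnessed by the single tableau $T_\nu$. Thus $\rect(\stan(T))=T_\nu$.

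\emph{Pulling back through the bijection.}
The heart of the argument is then to show that $\rho$-equivalence is compatible with rectification, i.e. that
\begin{align*}
\rect(\stan(\rtau)) = \rhoc_\varnothing^{-1}\bigl(\rect(\stan(T))\bigr) = \rhoc_\varnothing^{-1}(T_\nu) = \rtau_\nu .
\end{align*}
The last equality holds because $\rho_\varnothing$ sends the canonical SRCT $\rtau_\nu$ to the canonical SRT $T_\nu$ (both fill row $i$ with the same block of consecutive integers, up to the reordering within columns that $\rho$ performs). To make the middle step rigorous, I would appeal to the compatibility of the two insertion procedures under $\rho$: inserting a column of an $\SRCT$ via the $\SSRCT$ insertion of Example~\ref{ex:MRSK} corresponds, under $\rho_\varnothing$, to inserting the same column into the associated $\SSRT$ via the reverse RSK insertion of Example~\ref{ex:rschentsted}. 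This is precisely the content of \cite[Procedure 3.3]{mason-1} and its treatment in \cite[Chapter 4]{QSbook}, so the two rectifications are intertwined by $\rho_\varnothing$.

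\emph{The main obstacle.}
The delicate point — and the step I expect to require the most care — is verifying that $\rho_\varnothing$ genuinely intertwines the two rectification/insertion algorithms, rather than merely the static tableau bijection. One must check that at each insertion step the bumping in the $\SSRCT$ picture matches the bumping in the $\SSRT$ picture under $\rho_\varnothing$, so that the whole rectification commutes with $\rho_\varnothing$; this is where the correspondence between the composition insertion of Example~\ref{ex:MRSK} and the classical reverse insertion of Example~\ref{ex:rschentsted} must be used carefully. Once that intertwining is in hand, the lemma follows by combining it with the classical fact that LR reverse tableaux rectify to $T_\nu$.
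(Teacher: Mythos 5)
Your overall strategy---transport through $\rhoc_\beta$, invoke a classical rectification fact on the $\SSRT$ side, and use the intertwining $\rect(\rhoc_\beta^{-1}(\rT))=\rhoc_\varnothing^{-1}(\rect(\rT))$---is sound in principle; in fact it is exactly how the paper proves the right-rule analogue, Lemma~\ref{lem:rectr}, and the intertwining you flag as the main obstacle does not need to be reproved there: it is cited as \cite[Proposition 3.1]{mason-1}. The genuine gap is that your ``key classical input'' is false for the \emph{left} Littlewood--Richardson condition. For $\rT\in\LRRTL_\nu(\lambda/\mu)$ it is not true that $\rect(\stan(\rT))=\rT_\nu$. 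Take $\nu=(2,1)$ and the unique element $\rT$ of $\LRRTL_{(2,1)}((2,1))$, namely the straight-shape $\SSRT$ with first row $(2,1)$ and second row $(1)$ (note the witness here has content $\nu$, so it cannot be $\rT_\nu$, which is standard). Then $\stan(\rT)$ has rows $(3,1)$ and $(2)$, and inserting $2,3,1$ in that order gives $\rect(\stan(\rT))$ again with rows $(3,1)$ and $(2)$, whereas $\rT_{(2,1)}$ has rows $(3,2)$ and $(1)$. The identity $\rect(\stan(\rT))=\rT_\nu$ is the classical fact for the \emph{right} tableaux $\LRRTR_{\reverse{\nu}}(\lambda/\mu)$, of content $\reverse{\nu}$, and that is what the paper cites from \cite[Proposition 2.3]{HLMvW11a} and \cite[Theorem 3.4]{BLvW} in the proof of Lemma~\ref{lem:rectr}; it fails for the left condition.

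Your second identification fails for the same reason: $\rhoc_\varnothing$ does not send $\rtau_\nu$ to $\rT_\nu$. The canonical SRCT $\rtau_\nu$ places the block $1,\ldots,\nu_1$ in its top row, while the canonical SRT $\rT_\nu$ places the \emph{largest} block of consecutive integers in its top row; the correct statement, recorded in the paper's proof of Lemma~\ref{lem:rectr}, is $\rhoc_\varnothing^{-1}(\rT_\nu)=\rtau_{\reverse{\nu}}$. (For $\nu=(2,1)$: $\rhoc_\varnothing(\rtau_{(2,1)})$ has rows $(3,1),(2)$, not $(3,2),(1)$.) Your two errors compensate, in that the true SRT-side statement is $\rect(\stan(\rT))=\rhoc_\varnothing(\rtau_\nu)$, so the composite conclusion you want is correct---but both intermediate steps are false as written, and the correct intermediate target $\rhoc_\varnothing(\rtau_\nu)$ is not a canonical reverse tableau, so there is no off-the-shelf classical citation for it: proving that left LR reverse tableaux standardize-and-rectify to this particular SRT is essentially the content of the lemma itself. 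This is presumably why the paper abandons the transport strategy for the left rule and instead argues directly in the SRCT world, by induction on the insertions of the rectification: each inserted entry $j$ lands in the leftmost column if $j\in\{\nu_1,\nu_1+\nu_2,\ldots\}$, and immediately to the right of $j+1$ otherwise, which builds $\rtau_\nu$ column by column. To repair your proof you would either have to prove the SRT-side fact $\rect(\stan(\rT))=\rhoc_\varnothing(\rtau_\nu)$ for left LR reverse tableaux (which is no easier), or revert to the paper's direct induction.
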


\begin{proof} Let $\nu=(\nu_1,\ldots,\nu_{\ell(\nu)})$. We will prove this lemma by establishing, via induction on the number of insertions in the rectification process, that when $j$ is inserted it is placed in the leftmost column if $j\in \{ \nu_1, \nu_1+\nu_2, \ldots\}$ or placed to the immediate right of $j+1$ otherwise.

Note that the first number we will insert is $\nu _1$, which will be placed in the leftmost column. Now assume the result is true for $m$ insertions. Let the $m+1$-th insertion be of some number~$j$. Then there are two cases to consider.

If $j\in \{\nu _1, \nu_1+\nu_2, \ldots \}$, then since $\rtau \in \LRRCTL(\alpha \cskew \beta)$ this implies that the pre-image under standardization of $j$ is the leftmost occurrence of some number $i$, and hence all numbers inserted before $j$ are smaller than $j$. Thus the insertion algorithm will place $j$ in the leftmost column.

If $j\not\in \{\nu _1, \nu_1+\nu_2, \ldots \}$, then since $\rtau \in \LRRCTL(\alpha \cskew \beta)$ this implies that the pre-image under standardization of $j$ is the $k$-th occurrence from the left of some number $i$, where $k>1$, which is weakly left of the $k$-th $i+1$ from the left, which in turn is weakly left of the $k$-th $i+2$ from the left, etc. Thus by our induction hypothesis no row whose rightmost cell contains a number larger than $j$ is longer than the row containing $j+1$, \new{where $j+1$} has already been inserted since its pre-image is the
$(k-1)$-th occurrence of $i$ from the left. Thus the insertion algorithm will place $j$ to the immediate right of $j+1$ as desired.
\end{proof}

Consider now a partition $\nu\vdash \vert\alpha\cskew\beta\vert$. Recall Theorem~\ref{BLvW-main}, which implies that the coefficient of $\qs_\nu$ in $\qs_{\alpha\cskew\beta}$ equals the number of $\SRCT$s of shape $\alpha\cskew\beta$ that rectify to $\tau_{\nu}$. From Lemma \ref{lem:rect}, we obtain the following inclusion.
\begin{align*}
\{\stan (\tau)\suchthat \tau\in\LRRCTL_\nu(\alpha\cskew\beta)\} \subseteq \{\tau \suchthat \tau\in \SRCT (\alpha\cskew\beta)\text{ and }\rect(\tau)=\tau_{\nu}\}
\end{align*}
The next proposition establishes that the inclusion above is, in fact, an equality of sets.
\begin{proposition}\label{prop:LRcoeff} Given a partition $\nu$, the coefficient
 of $\qs _\nu$ in $\qs _{\alpha \cskew \beta}$ is
\begin{align*}
C^\alpha_{\nu \beta}=|\LRRCTL_\nu(\alpha \cskew \beta)|.
\end{align*}
\end{proposition}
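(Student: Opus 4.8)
The plan is to promote the set inclusion established just before the proposition to an equality of cardinalities. Proving the reverse inclusion directly---that every $\SRCT$ of shape $\alpha\cskew\beta$ rectifying to $\rtau_\nu$ destandardizes to a left Littlewood-Richardson reverse composition tableau of content $\nu$---looks awkward, so instead I would sum over all admissible outer shapes and let the classical rule (Theorem~\ref{the:LR}) pin the individual counts down by a squeezing argument.

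First I would record a termwise inequality valid for every composition $\gamma$ with $\partitionof{\gamma}=\lambda$ and $\beta <_{c} \gamma$. For such $\gamma$, Lemma~\ref{lem:rect} (whose proof uses nothing special about $\alpha$) shows that $\stan(\rtau)$ lies in $\{\sigma\in\SRCT(\gamma\cskew\beta)\suchthat \rect(\sigma)=\rtau_\nu\}$ for each $\rtau\in\LRRCTL_\nu(\gamma\cskew\beta)$. Since standardization is injective on $\SSRCT$s of a fixed content $\nu$---one recovers $\rtau$ from $\stan(\rtau)$ by relabelling the entries $1,\ldots,\nu_1$ as $1$, the entries $\nu_1+1,\ldots,\nu_1+\nu_2$ as $2$, and so on---the assignment $\rtau\mapsto\stan(\rtau)$ is a bijection onto its image, so Theorem~\ref{BLvW-main} gives
$$|\LRRCTL_\nu(\gamma\cskew\beta)|\leq C^\gamma_{\nu\beta}.$$

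Next I would evaluate both sides after summing over all $\gamma$ with $\partitionof{\gamma}=\lambda$ (shapes with $\beta\not<_{c}\gamma$ contribute empty sets and zero coefficients). On the tableaux side, the content-preserving bijection $\rhoc^{-1}_\beta$ restricts to a bijection $\LRRTL_\nu(\lambda/\mu)\to\bigcup_{\partitionof{\gamma}=\lambda}\LRRCTL_\nu(\gamma\cskew\beta)$, so Theorem~\ref{the:LR} yields
$$\sum_{\partitionof{\gamma}=\lambda}|\LRRCTL_\nu(\gamma\cskew\beta)|=|\LRRTL_\nu(\lambda/\mu)|=c^\lambda_{\nu\mu}.$$
On the coefficient side, Lemma~\ref{lem:skewsasskewqs} together with Theorem~\ref{BLvW-main} gives $s_{\lambda/\mu}=\sum_{\partitionof{\gamma}=\lambda}\sum_\delta C^\gamma_{\delta\beta}\qs_\delta$; comparing the coefficient of $\qs_\nu$ against that coming from $s_{\lambda/\mu}=\sum_\rho c^\lambda_{\rho\mu}\sum_{\partitionof{\delta}=\rho}\qs_\delta$ (only $\rho=\nu$ produces $\qs_\nu$, since $\nu$ is a partition) gives
$$\sum_{\partitionof{\gamma}=\lambda}C^\gamma_{\nu\beta}=c^\lambda_{\nu\mu}.$$

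Finally I would squeeze: the two displayed sums are equal, yet each summand on the left is bounded by the matching summand on the right, so equality must hold term by term. Taking $\gamma=\alpha$ yields $|\LRRCTL_\nu(\alpha\cskew\beta)|=C^\alpha_{\nu\beta}$, which is the assertion. The main obstacle is the termwise inequality: one must check carefully that standardization is injective once the content is fixed, and that Lemma~\ref{lem:rect} transfers verbatim to every admissible $\gamma$ rather than only the fixed $\alpha$; the global bookkeeping then rests on the easy but essential observation that, $\nu$ being a partition, it is extracted from $\sum_{\partitionof{\delta}=\rho}\qs_\delta$ only when $\rho=\nu$.
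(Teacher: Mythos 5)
Your proof is correct and takes essentially the same route as the paper's: the paper likewise combines the termwise contribution of each element of $\LRRCTL_\nu(\gamma \cskew \beta)$ (via Lemma~\ref{lem:rect} and Theorem~\ref{BLvW-main}), the set partition induced by $\rhoc^{-1}_\beta$, the classical rule (Theorem~\ref{the:LR}), and Lemma~\ref{lem:skewsasskewqs}, and then extracts the coefficient of $\qs_\nu$, which is precisely your squeezing argument stated more tersely. Your writeup merely makes explicit two points the paper leaves implicit, namely the injectivity of standardization at fixed content and the fact that Lemma~\ref{lem:rect} applies verbatim to every $\gamma$ with $\partitionof{\gamma}=\lambda$.
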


\begin{proof} From Lemma~\ref{lem:rect} and Theorem~\ref{BLvW-main} we have that each $\rtau \in \LRRCTL_\nu(\alpha \cskew \beta)$ contributes one towards the coefficient of $\qs _\nu$ in $\qs _{\alpha\cskew \beta}$, and similarly each $\rtau \in \LRRCTL_\nu(\gamma \cskew \beta)$ contributes one towards the coefficient of $\qs _\nu$ in $\qs _{\gamma\cskew \beta}$ for $\partitionof{\gamma} = \partitionof{\alpha}$. Classically, by Theorem~\ref{the:LR} we know that the number of $\rT \in \LRRTL_{\nu}(\lambda / \mu)$
is the coefficient of $s _\nu$ in $s _{\lambda/ \mu}$.

Furthermore by Lemma~\ref{lem:skewsasskewqs}, since $\partitionof{\alpha}=\lambda, \partitionof{\beta}=\mu$, we have that
\begin{equation}\label{eq:skewasskewqs} s_{\lambda /\mu}=\sum _{\partitionof{\gamma}=\lambda}\qs _{\gamma \cskew \beta}\end{equation}and we already know that
$$\rhoc ^{-1} _\beta (\LRRTL(\lambda /\mu)) = \LRRCTL(\alpha \cskew \beta ) \cup \bigcup _{\partitionof{\gamma }= \lambda \atop \gamma \neq \alpha}\LRRCTL(\gamma \cskew \beta ) $$and
$$s_\nu = \qs_\nu + \sum _{\partitionof{\gamma} = \nu \atop \gamma \neq \nu}\qs _\gamma.$$Taking the coefficient of $\qs _\nu$ on each side of Equation~\eqref{eq:skewasskewqs} yields the result.
\end{proof}
The proposition above immediately implies the following corollary, after which we give a proof for Theorem~\ref{the:NCLR}.
\begin{corollary}\label{cor:stbij} Given a partition $\nu$, standardization restricts to a bijection
$$\stan : \LRRCTL_\nu (\alpha \cskew \beta)
\to \{\rtau \in \SRCT(\alpha \cskew \beta) \mid \rect(\rtau)=\rtau _\nu\} \:.$$
\end{corollary}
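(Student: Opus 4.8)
The plan is to verify that $\stan$ is a well-defined injection from $\LRRCTL_\nu(\alpha\cskew\beta)$ into the set $\{\rtau \in \SRCT(\alpha\cskew\beta) \mid \rect(\rtau)=\rtau_\nu\}$, and then upgrade this to a bijection purely by counting. For well-definedness I would invoke Lemma~\ref{lem:rect}: every $\rtau \in \LRRCTL_\nu(\alpha\cskew\beta)$ satisfies $\stan(\rtau) \in \SRCT(\alpha\cskew\beta)$ with $\rect(\stan(\rtau)) = \rtau_\nu$, so $\stan$ indeed lands in the prescribed target. This is exactly the set inclusion recorded just before the statement of Proposition~\ref{prop:LRcoeff}.

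For injectivity I would use that every tableau in the domain has content exactly $\nu$. Standardization does not relocate any cell; it only relabels entries, replacing the $\nu_i$ copies of $i$ (read right to left) by the consecutive block of integers $\nu_1 + \cdots + \nu_{i-1} + 1, \ldots, \nu_1 + \cdots + \nu_i$. Consequently, given $\stan(\rtau)$ together with the fixed content $\nu$, one recovers $\rtau$ uniquely by collapsing each such block of integers back to $i$. Hence $\stan$ restricted to $\LRRCTL_\nu(\alpha\cskew\beta)$ is injective.

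Finally I would match cardinalities on the two sides. Proposition~\ref{prop:LRcoeff} gives $|\LRRCTL_\nu(\alpha\cskew\beta)| = C^\alpha_{\nu\beta}$, while the combinatorial description of $C^\alpha_{\nu\beta}$ in Theorem~\ref{BLvW-main}, namely Equation~\eqref{eq:nclr definition}, gives $|\{\rtau \in \SRCT(\alpha\cskew\beta) \mid \rect(\rtau)=\rtau_\nu\}| = C^\alpha_{\nu\beta}$. Thus the domain and target are finite sets of equal size, and an injection between finite sets of equal cardinality is automatically a bijection, which completes the argument. There is no real obstacle here: all the substantive work lives in Proposition~\ref{prop:LRcoeff} and Lemma~\ref{lem:rect}, and the only point demanding a moment's care is the reversibility of standardization on a single content class, which is immediate since standardization merely relabels entries without moving cells.
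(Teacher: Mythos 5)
Your proposal is correct and follows essentially the same route as the paper: the paper derives the corollary ``immediately'' from Proposition~\ref{prop:LRcoeff}, whose implicit content is exactly your argument---Lemma~\ref{lem:rect} gives the inclusion of the image of $\stan$ in the target set, Proposition~\ref{prop:LRcoeff} together with Equation~\eqref{eq:nclr definition} of Theorem~\ref{BLvW-main} matches the cardinalities via $C^\alpha_{\nu\beta}$, and injectivity of standardization on a fixed content class upgrades the inclusion to a bijection. You have merely made explicit (correctly) the injectivity step that the paper leaves unstated.
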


\begin{proof}(of Theorem~\ref{the:NCLR}) If $\qs _{\alpha\cskew\beta}$
is symmetric, then
the coefficient of $s_\nu$ in $\qs _{\alpha\cskew\beta}$
is the coefficient of $\qs_\nu$ in $\qs _{\alpha\cskew\beta}$
since $s_\nu = \qs_\nu + \sum _{\partitionof{\gamma} =
\nu \atop \gamma \neq \nu}\qs _\gamma$, and the
supports of the Schur functions in the basis of quasisymmetric Schur functions
are disjoint.
The result now follows by Proposition~\ref{prop:LRcoeff}.
\end{proof}

\section{A right Littlewood-Richardson rule for symmetric skew quasisymmetric Schur functions}
In this section, we will give a right Littlewood-Richardson rule for symmetric skew quasisymmetric Schur functions. The classical version that our rule will resemble is different from the one stated in the previous section. We will first define right Littlewood-Richardson reverse tableaux.

Given a skew shape $\lambda/\mu$, let $\LRRTR(\lambda/\mu)$ denote the set of all $\SSRT$s of shape $\lambda/\mu$ that have been constructed as follows. Given a partition $\nu=(\nu_1,\ldots,\nu_{\ell(\nu)})$, using the cover relations of $\Ly$ to place cells on the inner shape~$\mu$, place $\nu _1$ cells containing $\ell(\nu) $ from left to right, $\nu _2$ cells containing $\ell(\nu)-1 $ from left to right,$\ldots$, $\nu_{\ell(\nu)}$ cells containing $1$ from left to right, such that the $k$-th $i+1$ from the \emph{right} is weakly \emph{right} of the $k$-th $i$ from the \emph{right}. Whenever the resulting $\SSRT$ is of shape~$\lambda/\mu$, this is an element in our set $\LRRTR(\lambda/\mu)$.
Note that the aforementioned procedure implies that once such an $\SSRT$ has been constructed, as we read the entries of each column taken from right to left (where within a column we read the entries from largest to smallest) the number of $i+1$s we have read is always weakly greater than the number of $i$s we have read. Note the converse also holds. The elements of $\LRRTR(\lambda/\mu)$ will be referred to as \emph{right Littlewood-Richardson reverse tableaux} of \emph{shape}~$\lambda/\mu$.

Recall now the following classical Littlewood-Richardson rule for reverse tableaux
(for example, see \cite[Section 5.2]{fulton-1}).
\begin{theorem}\label{the:LRR} Given $\lambda / \mu$ and a partition $\nu$, the coefficient of $s_{\nu}$ in $s_{\lambda/\mu}$ is equal to the cardinality of the set
\begin{align*}
\LRRTR _{\reverse{\nu}}(\lambda/\mu) =
\{T\in \LRRTR(\lambda/\mu)\suchthat \cont(T)=\reverse{\nu}\}.
\end{align*}
\end{theorem}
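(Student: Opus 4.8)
\emph{Proof proposal.} The statement is the classical Littlewood-Richardson rule recast in the language of reverse tableaux, so the plan is to transport the usual rule for semistandard Young tableaux across the entry-complementation map and then invoke it as a black box. First I would observe that the skew Schur function $s_{\lambda/\mu}$, which was defined here via $\SSRT$s, agrees with the usual skew Schur function, so the coefficient of $s_\nu$ in $s_{\lambda/\mu}$ is the classical Littlewood-Richardson coefficient $c^{\lambda}_{\nu\mu}$. By the Yamanouchi-word form of the classical rule (see \cite[Section 5.2]{fulton-1}), $c^{\lambda}_{\nu\mu}$ counts the semistandard Young tableaux of shape $\lambda/\mu$ and content $\nu$ whose reverse reading word is a lattice word. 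It therefore suffices to produce a bijection between this set of classical Littlewood-Richardson tableaux and $\LRRTR_{\reverse{\nu}}(\lambda/\mu)$.

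Let $\ell=\ell(\nu)$, which is the largest entry used in the construction of elements of $\LRRTR_{\reverse{\nu}}(\lambda/\mu)$. The bijection I would use is entry complementation $k\mapsto \ell+1-k$. This sends weakly decreasing rows to weakly increasing rows and strictly decreasing columns to strictly increasing columns, hence maps $\SSRT$s with entries in $\{1,\dots,\ell\}$ bijectively to semistandard Young tableaux with entries in $\{1,\dots,\ell\}$ of the same shape. A short content bookkeeping then shows that, since an element of $\LRRTR_{\reverse{\nu}}(\lambda/\mu)$ has $\nu_{\ell+1-w}$ occurrences of each value $w$, its image has exactly $\nu_i$ occurrences of each value $i$; thus content $\reverse{\nu}$ on the reverse side matches content $\nu$ on the ordinary side, as required.

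The key step is to verify that the defining condition of $\LRRTR$ is carried to the lattice-word condition. Under complementation, reading the columns of $T$ from right to left and, within each column, from the largest entry to the smallest corresponds to reading the columns of the image from right to left and, within each column, from top to bottom; the inequality ``the number of $i+1$s read is weakly greater than the number of $i$s read'' becomes ``the number of $j$s read is weakly greater than the number of $j+1$s read'', i.e.\ precisely the ballot/lattice condition. The one genuine point requiring care — and where I expect the real work to lie — is that this column reading order detects the same Littlewood-Richardson tableaux as the standard row reading word used in the classical rule. I would handle this either by appealing to the standard fact that for skew semistandard tableaux the property of being a lattice (Littlewood-Richardson) tableau is independent of the chosen reverse reading order, or, more self-containedly, by checking directly that the constructive placement via the cover relations of $\Ly$ in the definition of $\LRRTR$ forces the complemented image to be an ordinary Littlewood-Richardson tableau (and conversely). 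Combining this bijection with the classical rule then identifies $|\LRRTR_{\reverse{\nu}}(\lambda/\mu)|$ with $c^{\lambda}_{\nu\mu}$, the coefficient of $s_\nu$ in $s_{\lambda/\mu}$, which is the desired conclusion.
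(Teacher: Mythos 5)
The paper does not actually prove Theorem~\ref{the:LRR}; it recalls it as a known classical result and simply cites \cite[Section 5.2]{fulton-1}, so there is no internal proof to compare yours against. Your proposal supplies precisely the translation that makes such a citation legitimate, since Fulton states the rule for ordinary semistandard Young tableaux with a row reading word, while the statement here concerns reverse tableaux and a column-type reading condition. Your complementation map $k\mapsto \ell+1-k$ with $\ell=\ell(\nu)$ is correct: it carries $\SSRT$s of shape $\lambda/\mu$ with entries in $[\ell]$ bijectively to semistandard Young tableaux of the same shape, your content bookkeeping (content $\reverse{\nu}$ becoming content $\nu$) is right, and it converts the defining condition of $\LRRTR(\lambda/\mu)$, in its column-reading form stated just before the theorem, into the ballot condition for the column reading word of the image. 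The step you flag as the real work --- that ballot-ness for this column word coincides with ballot-ness for the usual reverse row word --- is genuinely needed and is true, but ``independence of the reading order'' should not be invoked for arbitrary orders; if you want it self-contained, rephrase both conditions as pairing conditions (row-ballot: the $k$-th $j$ from the right lies in a row strictly above the $k$-th $j+1$ from the right; column condition: it lies weakly to its right) and use the fact that if two cells of a skew shape are opposite corners of a rectangle then the other two corners also lie in the shape. Then column condition $\Rightarrow$ row-ballot because otherwise the relevant SW corner cell would have an entry both $\le j$ and $>j+1$; and row-ballot $\Rightarrow$ column condition because otherwise the NE corner cell would be forced to contain another $j$ strictly to the right of the $k$-th one, contradicting the minimality of a least offending $k$. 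With that lemma in place your argument is complete, and it is a worthwhile fleshing-out of what the paper leaves to the reference.
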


An analogous rule holds for reverse composition tableaux, and to state it we need some notation. Given a skew reverse composition shape $\alpha\cskew \beta$, let $\LRRCTR(\alpha\cskew\beta)$ denote the set of all $\SSRCT$s of shape $\alpha\cskew\beta$ that have been constructed as follows. Given a partition $\nu=(\nu_1,\ldots,\nu_{\ell(\nu)})$, using the cover relations of $\Lc$ to place cells on the inner shape~$\beta$, place $\nu _1$ cells containing $\ell(\nu) $ from left to right, $\nu _2$ cells containing $\ell(\nu)-1 $ from left to right,$\ldots$, $\nu_{\ell(\nu)}$ cells containing $1$ from left to right, such that the $k$-th $i+1$ from the \emph{right} is weakly \emph{right} of the $k$-th $i$ from the \emph{right}.
Whenever the resulting $\SSRCT$ is of shape $\alpha\cskew\beta$, this is an element in our set $\LRRCTR(\alpha\cskew\beta)$.
Again, we have that once such an $\SSRCT$ has been constructed, as we read the entries of each column taken from right to left (where within a column we read the entries from largest to smallest) the number of $i+1$s we have read is always weakly greater than the number of $i$s we have read. Note that the converse also holds. We will refer to the elements of $\LRRCTR(\alpha\cskew \beta)$ as \emph{right Littlewood-Richardson reverse composition tableaux} of \emph{shape} $\alpha\cskew\beta$. Armed with this notation, we are ready to state our analogue of Theorem~\ref{the:LRR} for symmetric skew quasisymmetric Schur functions.

\begin{theorem}\label{the:NCLRR}
Given $\alpha \cskew \beta$ such that $\qs _{\alpha \cskew \beta}$ is symmetric, and a partition $\nu$, the coefficient of $s_{\nu}$ in $\qs_{\alpha\cskew \beta}$ is equal to the cardinality of the set
\begin{align*}
\LRRCTR _{\reverse{\nu}}(\alpha\cskew \beta) =
\{\tau \in \LRRCTR(\alpha\cskew\beta)\suchthat \cont(\tau)=\reverse{\nu}\}.
\end{align*}
\end{theorem}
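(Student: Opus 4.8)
The plan is to mirror, step for step, the development that established Theorem~\ref{the:NCLR}, replacing the left-hand combinatorics by their right-hand counterparts. The one genuinely new feature is that a right Littlewood-Richardson reverse composition tableau of content $\reverse{\nu}$ will rectify to $\rtau_{\reverse{\nu}}$ rather than to $\rtau_{\nu}$; accordingly I track the coefficient of $\qs_{\reverse{\nu}}$ (not $\qs_{\nu}$) throughout, and only pass to $s_{\nu}$ at the very end via symmetry. As in Section~\ref{sec:left}, I fix $\alpha,\beta$ with $\partitionof{\alpha}=\lambda$ and $\partitionof{\beta}=\mu$.

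First I would record the right-hand analogue of the $\rhoc^{-1}_\beta$ restriction. Since $\rhoc_\beta$ merely sorts the entries within each column, and the defining condition of both $\LRRTR$ and $\LRRCTR$ is phrased through reading the columns from right to left (each column read from largest to smallest), that condition depends only on the multiset of entries in each column and is hence preserved by $\rhoc_\beta$. Therefore $\rhoc^{-1}_\beta$ restricts to a bijection $\LRRTR(\lambda/\mu)\to\bigcup_{\partitionof{\gamma}=\lambda}\LRRCTR(\gamma\cskew\beta)$, yielding the set partition
\[
\rhoc^{-1}_\beta(\LRRTR(\lambda/\mu)) = \LRRCTR(\alpha\cskew\beta)\cup\bigcup_{\substack{\partitionof{\gamma}=\lambda\\ \gamma\neq\alpha}}\LRRCTR(\gamma\cskew\beta).
\]

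Second, the technical heart is the right analogue of Lemma~\ref{lem:rect}: for $\rtau\in\LRRCTR_{\reverse{\nu}}(\alpha\cskew\beta)$ one shows $\rect(\stan(\rtau))=\rtau_{\reverse{\nu}}$. I would prove this by induction on the number of insertions, exactly as in Lemma~\ref{lem:rect} but with $\reverse{\nu}$ in place of $\nu$: the reverse-reading lattice condition should guarantee that when a value $j$ is inserted it is placed in the leftmost column precisely when $j$ is a row-initial (equivalently leftmost-column) entry of $\rtau_{\reverse{\nu}}$, and is otherwise placed immediately to the right of the already-inserted $j+1$. I expect this to be the main obstacle, since one must verify that standardization together with the reverse-reading condition governs the insertions just as the left-hand condition did in Lemma~\ref{lem:rect}; the bookkeeping is a mirror image of the left case but requires full re-verification, and one must be careful that the target is $\rtau_{\reverse{\nu}}$ (as the small straight-shape cases already indicate) rather than $\rtau_{\nu}$.

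Finally, with this rectification lemma in hand the remaining two steps are faithful translations. Combining it with Theorem~\ref{BLvW-main} shows that each element of $\LRRCTR_{\reverse{\nu}}(\gamma\cskew\beta)$ contributes one to the coefficient of $\qs_{\reverse{\nu}}$ in $\qs_{\gamma\cskew\beta}$; then, writing $s_{\lambda/\mu}=\sum_{\partitionof{\gamma}=\lambda}\qs_{\gamma\cskew\beta}$ by Lemma~\ref{lem:skewsasskewqs}, using the classical count $|\LRRTR_{\reverse{\nu}}(\lambda/\mu)|$ from Theorem~\ref{the:LRR} together with the bijection of the first step, and extracting the coefficient of $\qs_{\reverse{\nu}}$ (noting $s_{\nu}=\qs_{\reverse{\nu}}+\sum_{\partitionof{\gamma}=\nu,\ \gamma\neq\reverse{\nu}}\qs_{\gamma}$) forces each individual inequality to become an equality. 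This is the right analogue of Proposition~\ref{prop:LRcoeff} and gives that the coefficient of $\qs_{\reverse{\nu}}$ in $\qs_{\alpha\cskew\beta}$ equals $|\LRRCTR_{\reverse{\nu}}(\alpha\cskew\beta)|$. To conclude, I invoke symmetry exactly as in the proof of Theorem~\ref{the:NCLR}: when $\qs_{\alpha\cskew\beta}$ is symmetric, the coefficient of $s_{\nu}$ equals the coefficient of $\qs_{\reverse{\nu}}$, because $\qs_{\reverse{\nu}}$ lies only in the (pairwise disjoint) support of $s_{\nu}$, and the theorem follows.
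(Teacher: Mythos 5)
Your overall architecture coincides with the paper's: the set partition of $\rhoc^{-1}_\beta(\LRRTR(\lambda/\mu))$, a rectification lemma for elements of $\LRRCTR_{\reverse{\nu}}(\alpha\cskew\beta)$, the coefficient-extraction argument against Lemma~\ref{lem:skewsasskewqs} and Theorem~\ref{the:LRR}, and the final passage from the coefficient of $\qs_{\reverse{\nu}}$ to that of $s_\nu$ via symmetry and disjointness of supports. However, your proposed proof of the key lemma --- induction on insertions ``exactly as in Lemma~\ref{lem:rect} but with $\reverse{\nu}$ in place of $\nu$'' --- fails: the invariant you assert (each inserted label $j$ lands in the leftmost column precisely when $j$ is a leftmost-column entry of $\rtau_{\reverse{\nu}}$, and otherwise lands immediately to the right of the already-inserted $j+1$) is false for right Littlewood--Richardson fillings. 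The left argument works because the $\LRRCTL$ condition, combined with the column-by-column, left-to-right insertion order, guarantees that when the label of a \emph{leftmost} occurrence of a value is inserted, every previously inserted label is smaller; hence each entry lands in its final position at once and no bumping ever occurs. The $\LRRCTR$ condition is counted from the \emph{right}; since $\nu$ is a partition there are generally more $i+1$'s than $i$'s, and the surplus $i+1$'s may sit strictly to the left of the leftmost $i$. Consequently large labels are inserted before small ones, bumping genuinely occurs, and the intermediate tableaux are not partial copies of $\rtau_{\reverse{\nu}}$.

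Concretely, take the first tableau of Example~\ref{ex:new:LRR}, which lies in $\LRRCTR_{(1,1,7)}((9,3,2,4)\cskew(4,1,1,3))$, so $\reverse{\nu}=(1,1,7)$. Its standardization has insertion sequence $1,9,2,8,7,6,5,4,3$, and one checks: after $1$ and $9$ are inserted (each creating a new row in the leftmost column), the label $2$ --- which \emph{is} a leftmost-column entry of $\rtau_{(1,1,7)}$ --- is placed at the end of the row containing $9$, not in the leftmost column; it reaches the leftmost column only when the subsequent insertion of $8$ bumps it there. So your induction hypothesis already breaks at the third insertion, even though the final rectification is indeed $\rtau_{(1,1,7)}$. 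The paper sidesteps this entirely and proves Lemma~\ref{lem:rectr} by transfer rather than by direct induction: it quotes the classical fact (\cite[Proposition 2.3]{HLMvW11a} together with \cite[Theorem 3.4]{BLvW}) that $\rect(\stan(\rT))=\rT_\nu$ for every $\rT\in\LRRTR_{\reverse{\nu}}(\lambda/\mu)$, Mason's commutation result \cite[Proposition 3.1]{mason-1} that $\rect(\rhoc^{-1}_\beta(T))=\rhoc^{-1}_\varnothing(\rect(T))$ for any $T\in\SRT(\lambda/\mu)$, the identity $\rhoc^{-1}_\varnothing(\rT_\nu)=\rtau_{\reverse{\nu}}$, and the compatibility of standardization with $\rhoc_\beta$, assembling these into a commutative diagram. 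To repair your argument you must either invoke these transfer results as the paper does, or devise a substantially stronger insertion invariant that tracks bumping; the naive mirror image of Lemma~\ref{lem:rect} is not available.
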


\begin{remark} \label{rem:classicalLRR}
As before, notice that if $\alpha$ and $\beta$ are partitions of the same length then we recover the classical Littlewood-Richardson rule, as the cover relations on $\Ly$ and $\Lc$ are identical since nothing is added to the leftmost column.
\end{remark}
Next we consider the same examples as we did in the previous section, but use Theorem~\ref{the:NCLRR} instead.
\begin{example}\label{ex:new:LRR} We obtain $$\qs _{(9,3,2,4)\cskew (4,1,1,3)} = s_{(7,1,1)} +s_{(7,2)}+  2s_{(6,2,1)} +s_{(6,1,1,1)}+ s_{(5,2,1,1)} +s_{(5,2,2)}$$ from the (complete list of) elements of $\LRRCTR ((9,3,2,4)\cskew(4,1,1,3))$ below.
$$\tableau{\bullet&\bullet&\bullet&\bullet&3&3&3&3&3\\
\bullet&3&2\\
\bullet&1\\
\bullet&\bullet&\bullet&3}\quad
\tableau{\bullet&\bullet&\bullet&\bullet&2&2&2&2&2\\
\bullet&2&1\\
\bullet&1\\
\bullet&\bullet&\bullet&2}
$$
$$
\tableau{\bullet&\bullet&\bullet&\bullet&3&3&3&3&3\\
\bullet&2&2\\
\bullet&1\\
\bullet&\bullet&\bullet&3}\quad
\tableau{\bullet&\bullet&\bullet&\bullet&3&3&3&3&3\\
\bullet&3&1\\
\bullet&2\\
\bullet&\bullet&\bullet&2}
$$
$$
\tableau{\bullet&\bullet&\bullet&\bullet&4&4&4&4&4\\
\bullet&4&2\\
\bullet&1\\
\bullet&\bullet&\bullet&3}\quad
\tableau{\bullet&\bullet&\bullet&\bullet&4&4&4&4&4\\
\bullet&3&2\\
\bullet&1\\
\bullet&\bullet&\bullet&3}
$$
$$
\tableau{\bullet&\bullet&\bullet&\bullet&3&3&3&3&3\\
\bullet&2&1\\
\bullet&1\\
\bullet&\bullet&\bullet&2}
$$
\end{example}

\begin{example}\label{ex:newLRR2} We obtain $$\qs _{(2,3,3,2)\cskew (1,2,1)} = s_{(2,2,1,1)} +s_{(2,2,2)}$$ from the (complete list of) elements of $\LRRCTR ((2,3,3,2)\cskew(1,2,1))$ below.
$$\tableau{
3&2\\
\bullet&4&4\\
\bullet&\bullet& 3\\
\bullet& 1}\qquad
\tableau{
1&1\\
\bullet&3&3\\
\bullet&\bullet& 2\\
\bullet& 2}$$
\end{example}

For the remainder of this section, fix compositions $\alpha$ and $\beta$ such that $\beta <_{c} \alpha$. Suppose further that $\partitionof{\alpha}=\lambda, \partitionof{\beta}=\mu$. Similar to the previous section, we observe that we have the following set partition.
\begin{equation*}\label{eq:unionsr}
\rhoc ^{-1} _\beta (\LRRTR(\lambda /\mu)) = \LRRCTR(\alpha \cskew \beta ) \cup \bigcup _{\partitionof{\gamma }= \lambda \atop \gamma \neq \alpha}\LRRCTR(\gamma \cskew \beta )
\end{equation*}
As in the previous section, we will establish that only the elements of $\LRRCTR(\alpha \cskew \beta )$ contribute to the quasisymmetric Schur function expansion of $\qs_{\alpha\cskew\beta}$. Towards this end, we need the following lemma.

\begin{lemma}\label{lem:rectr}
Let $\nu$ be a partition and $\rtau \in \LRRCTR_{\reverse{\nu}}(\alpha \cskew \beta)$. Then $\rect(\stan(\rtau)) =\rtau _{\reverse{\nu}}$.
\end{lemma}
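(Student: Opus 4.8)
The plan is to mirror the proof of Lemma~\ref{lem:rect} from the previous section, adapting it to the ``right'' conventions. The goal is to track, via induction on the number of insertions in the rectification process, exactly where each entry of $\stan(\rtau)$ lands, and to verify that the resulting tableau is precisely $\rtau_{\reverse{\nu}}$. Recall that if $\nu = (\nu_1, \ldots, \nu_{\ell(\nu)})$, then $\reverse{\nu} = (\nu_{\ell(\nu)}, \ldots, \nu_1)$, so the content condition $\cont(\rtau) = \reverse{\nu}$ means that the entry $1$ appears $\nu_{\ell(\nu)}$ times, the entry $2$ appears $\nu_{\ell(\nu)-1}$ times, and so on, with the largest entry $\ell(\nu)$ appearing $\nu_1$ times.

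First I would record the defining combinatorial feature of an element $\rtau \in \LRRCTR(\alpha \cskew \beta)$ highlighted in the text just before the statement of Theorem~\ref{the:NCLRR}: reading the entries of each column from right to left (and largest to smallest within a column), the number of $i+1$s read is always weakly greater than the number of $i$s read, with the $k$-th $i+1$ from the right weakly right of the $k$-th $i$ from the right. The claim I would prove by induction is that when an entry $j$ of $\stan(\rtau)$ is inserted, it is placed in the leftmost column precisely when $j$ corresponds (under destandardization) to the \emph{rightmost} occurrence of its underlying value, and otherwise $j$ is placed to the immediate right of $j-1$ (rather than $j+1$, as in the left rule), reflecting the reversal of the content. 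The distinguished tableau $\rtau_{\reverse{\nu}}$ is exactly the canonical reverse composition tableau whose rows are filled with consecutive integers in the pattern dictated by $\reverse{\nu}$, so matching these placement positions to the rows of $\rtau_{\reverse{\nu}}$ completes the argument.

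For the inductive step I would split into the same two cases as in Lemma~\ref{lem:rect}. If the entry $j$ being inserted destandardizes to the rightmost (equivalently, largest-labelled) occurrence of some value $i$, then by the right-Yamanouchi condition all entries inserted before $j$ are smaller, forcing $j$ into the leftmost column. If instead $j$ destandardizes to a non-rightmost occurrence, say the $k$-th from the right with $k > 1$, then the right Littlewood--Richardson condition guarantees that the corresponding $k$-th occurrence of $i-1$ from the right (whose standardized label is $j-1$) lies weakly to the right, and $j-1$ has already been inserted; the induction hypothesis then ensures that no row ending in an entry exceeding $j$ is longer than the row containing $j-1$, so the insertion algorithm deposits $j$ immediately to the right of $j-1$. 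Assembling these placements row by row reproduces $\rtau_{\reverse{\nu}}$.

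The main obstacle, as in the left case, is the careful bookkeeping in the second case: one must verify that the right-weakly-right condition on occurrences translates correctly, under standardization, into the precise length comparison of rows needed for the bumping-free placement, and in particular that the reversal of content does not disturb the monotonicity the insertion algorithm relies on. I would therefore phrase the induction hypothesis to carry forward not only the final position of each inserted entry but also the invariant that, at each stage, the partially built tableau agrees with the corresponding truncation of $\rtau_{\reverse{\nu}}$; this invariant makes the row-length comparisons in the second case immediate from the structure of the canonical tableau, and I expect the rest of the verification to be routine.
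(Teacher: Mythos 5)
Your proposed invariant is wrong on both counts, and the failure is visible in the paper's own Example~\ref{ex:newLRR2}. First, in any $\SSRCT$ the rows weakly decrease from left to right, so an entry $j$ can never be placed immediately to the right of $j-1$; in the target tableau $\rtau_{\reverse{\nu}}$ every entry not in the leftmost column sits immediately to the right of $j+1$, exactly as in the left rule---reversing the content does not reverse this. Second, the leftmost column of $\rtau_{\reverse{\nu}}$ consists of the partial sums of $\reverse{\nu}$, which are the standardized labels of the \emph{leftmost} occurrences of each value (each value's largest label), not the rightmost. Concretely, take the first tableau $\rtau$ of Example~\ref{ex:newLRR2}, where $\nu=(2,2,1,1)$: it has a $3$ in cell $(1,1)$ and another in cell $(3,3)$. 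Standardization labels the leftmost $3$ by $4$ and the rightmost $3$ by $3$, and in $\rect(\stan(\rtau))=\rtau_{(1,1,2,2)}$ the entry $4$ lies in the leftmost column while the entry $3$ does not.

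More seriously, even after correcting the invariant, the induction of Lemma~\ref{lem:rect} does not transfer, because its engine breaks: in the left case, when the label of a leftmost occurrence of $i$ is inserted, \emph{every} previously inserted entry is smaller (all values $\geq i$ lie weakly right of that cell), so it drops into the leftmost column and no insertion ever bumps a previously placed entry. In the right case the content $\reverse{\nu}$ is weakly increasing, so larger values occur weakly more often and can occur strictly to the left of the leftmost occurrence of a smaller value; consequently insertions genuinely bump earlier entries, and the intermediate tableaux are \emph{not} truncations of $\rtau_{\reverse{\nu}}$---so the strengthened invariant you propose to carry through the induction is false. In the same example the insertion word is $4,1,2,6,3,5$: after two insertions the tableau is the single row with entries $4,1$, whereas $4$ and $1$ occupy different rows of $\rtau_{(1,1,2,2)}$, and the third insertion (of $2$) bumps the $1$ into the leftmost column. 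This is precisely why the paper abandons the inductive approach for the right rule and argues instead by transport of structure: it cites the classical fact that $\rect(\stan(\rT))=\rT_{\nu}$ for $\rT\in\LRRTR_{\reverse{\nu}}(\lambda/\mu)$ (\cite[Proposition 2.3]{HLMvW11a} together with \cite[Theorem 3.4]{BLvW}), Mason's commutation $\rect(\rhoc_{\beta}^{-1}(T))=\rhoc_{\varnothing}^{-1}(\rect(T))$ from \cite[Proposition 3.1]{mason-1}, the identity $\rhoc_{\varnothing}^{-1}(\rT_{\nu})=\rtau_{\reverse{\nu}}$, and the compatibility of standardization with $\rhoc_{\beta}$, assembled into a commutative diagram. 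If you want a direct inductive proof you would need an invariant stable under bumping, which is a substantially harder bookkeeping problem than the one solved in the left case.
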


\begin{proof}
Let $\rT \in \LRRTR _{\reverse{\nu}}(\lambda/\mu)$. It follows from \cite[Proposition 2.3]{HLMvW11a} and \cite[Theorem 3.4]{BLvW} that
$$\rect(\stan(\rT))=\rT _{\nu}$$
where we recall that $\rT _{\nu}$ is the canonical reverse tableau
of shape~$\nu$. Now \cite[Proposition 3.1]{mason-1} implies that for any $T\in \SRT (\lambda/\mu)$, we have that $$\rect(\rho_{\beta}^{-1}(T))= \rho_{\varnothing}^{-1}(\rect (T)).$$ Note also that $\rhoc_{\varnothing}^{-1}(\rT_{\nu})=\rtau_{\reverse{\nu}}$.
Let $\rT _{\nu} (\lambda/\mu) $ denote the set of all $\SRT$s of shape $\lambda /\mu$ that rectify to $\rT _{\nu}$, and let $\rtau _{\reverse{\nu}}(\gamma \cskew \beta)$ denote the set of all $\SRCT$s of shape $\gamma \cskew \beta$ that rectify to $\rtau _{\reverse{\nu}}$.
Then we have the following commutative diagram that proves the claim.
$$\xymatrix{
\LRRTR _{\reverse{\nu}}(\lambda / \mu)   \ar@{->}[d] _{\rho ^{-1} _\beta}  \ar@{->}[rr]^{\stan} && \rT _{\nu}(\lambda/\mu) \ar@{->}[d] ^{\rho ^{-1} _\beta} \\
\displaystyle\bigcup _{\partitionof{\gamma}=\lambda}\LRRCTR _{\reverse{\nu}}(\gamma\cskew \beta)  \ar@{->}[rr]_{\stan}  &&
\displaystyle\bigcup _{\partitionof{\gamma}=\lambda}
\rtau _{\reverse{\nu}}(\gamma \cskew \beta)}
$$
\end{proof}

Our strategy now is similar to the one we employed in the previous section. Consider a partition $\nu$ of $\vert\alpha\cskew\beta\vert$. By Theorem~\ref{BLvW-main}, we have that the coefficient of $\qs_{\reverse{\nu}}$ in $\qs_{\alpha\cskew\beta}$ is the number of $\SRCT$s of shape $\alpha\cskew\beta$ that rectify to $\tau_{\reverse{\nu}}$. From Lemma \ref{lem:rectr}, we obtain the following inclusion.
\begin{align*}
\{\stan (\tau)\suchthat \tau\in \LRRCTR_{\reverse{\nu}}(\alpha\cskew\beta)\} \subseteq \{\tau \suchthat \tau\in \SRCT (\alpha\cskew\beta)\text{ and }\rect(\tau)=\tau_{\reverse{\nu}}\}
\end{align*}

The next proposition establishes that the inclusion above is an equality of sets.
\begin{proposition}\label{prop:LRRcoeff} Given a partition $\nu$, the coefficient
of $\qs _{\reverse{\nu}}$ in $\qs _{\alpha \cskew \beta}$
is
\begin{align*}
C^\alpha_{\nu^r \beta}=
|\LRRCTR _{\reverse{\nu}}(\alpha\cskew \beta)|
.
\end{align*}
\end{proposition}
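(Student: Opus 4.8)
The plan is to mirror exactly the argument given for Proposition~\ref{prop:LRcoeff} in the previous section, replacing the left-tableau machinery with its right-tableau counterpart at every step. The key inputs are already assembled: Lemma~\ref{lem:rectr} establishes that every $\rtau \in \LRRCTR_{\reverse{\nu}}(\alpha \cskew \beta)$ standardizes to an $\SRCT$ that rectifies to $\rtau_{\reverse{\nu}}$, and Theorem~\ref{BLvW-main} tells us that the coefficient $C^\alpha_{\nu^r\beta}$ of $\qs_{\reverse{\nu}}$ in $\qs_{\alpha\cskew\beta}$ counts precisely the $\SRCT$s of shape $\alpha\cskew\beta$ that rectify to $\tau_{\reverse{\nu}}$. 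So the containment of one set in the other is in hand, and the content of the proposition is that this inclusion is an equality.

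First I would invoke Theorem~\ref{the:LRR}, the classical right Littlewood-Richardson rule, which says that $|\LRRTR_{\reverse{\nu}}(\lambda/\mu)|$ equals the coefficient $c^\lambda_{\nu\mu}$ of $s_\nu$ in $s_{\lambda/\mu}$. Next I would use the set partition established at the start of this section,
\begin{equation*}
\rhoc^{-1}_\beta(\LRRTR(\lambda/\mu)) = \LRRCTR(\alpha\cskew\beta) \cup \bigcup_{\partitionof{\gamma}=\lambda \atop \gamma\neq\alpha}\LRRCTR(\gamma\cskew\beta),
\end{equation*}
which holds because $\rhoc^{-1}_\beta$ is a bijection and respects the Littlewood-Richardson construction. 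Since $\rhoc^{-1}_\beta$ preserves content, restricting to content $\reverse{\nu}$ refines this into a disjoint union of the sets $\LRRCTR_{\reverse{\nu}}(\gamma\cskew\beta)$ over all $\gamma$ with $\partitionof{\gamma}=\lambda$. The point is then a global counting argument: summing the contributions over all such $\gamma$, the total number of right Littlewood-Richardson reverse composition tableaux of content $\reverse{\nu}$ equals $c^\lambda_{\nu\mu}$, and by Lemma~\ref{lem:skewsasskewqs} we have $s_{\lambda/\mu} = \sum_{\partitionof{\gamma}=\lambda}\qs_{\gamma\cskew\beta}$.

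The decisive step is to extract the single coefficient of $\qs_{\reverse{\nu}}$ from this aggregate identity. Taking the coefficient of $\qs_{\reverse{\nu}}$ on both sides of $s_{\lambda/\mu}=\sum_{\partitionof{\gamma}=\lambda}\qs_{\gamma\cskew\beta}$ and using the expansion $s_\nu = \qs_\nu + \sum_{\partitionof{\gamma}=\nu \atop \gamma\neq\nu}\qs_\gamma$ together with the corresponding expansion of $s_{\lambda/\mu}$ into Schur functions, the contributions from the other shapes $\gamma\neq\alpha$ match up on both sides and cancel, leaving exactly $C^\alpha_{\nu^r\beta} = |\LRRCTR_{\reverse{\nu}}(\alpha\cskew\beta)|$.

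The main obstacle I anticipate is the bookkeeping around reversal: unlike the left rule, where the content of an $\LRRCTL$ tableau is directly $\nu$, here the content is $\reverse{\nu}$, so one must be careful that the correspondence $\rhoc^{-1}_\beta$ sends content-$\reverse{\nu}$ objects to content-$\reverse{\nu}$ objects (which it does, since it acts columnwise and does not alter the multiset of entries) and that the target coefficient is indeed that of $\qs_{\reverse{\nu}}$, not $\qs_\nu$. Once the indexing is kept consistent, the cancellation argument is formally identical to that of Proposition~\ref{prop:LRcoeff}, with Lemma~\ref{lem:rectr} playing the role of Lemma~\ref{lem:rect}.
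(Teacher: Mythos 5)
Your proposal is correct and follows essentially the same route as the paper's own proof: Lemma~\ref{lem:rectr} together with Theorem~\ref{BLvW-main} gives the inclusion, Theorem~\ref{the:LRR}, the set partition under $\rhoc^{-1}_\beta$, and Lemma~\ref{lem:skewsasskewqs} give the aggregate count, and extracting the coefficient of $\qs_{\reverse{\nu}}$ forces the termwise equality. The only cosmetic difference is that the paper records the expansion as $s_\nu = \qs_{\reverse{\nu}} + \sum_{\partitionof{\gamma}=\nu,\,\gamma\neq\reverse{\nu}}\qs_\gamma$ rather than your $s_\nu = \qs_{\nu}+\cdots$, which is the same identity and does not affect the argument since you correctly target the coefficient of $\qs_{\reverse{\nu}}$.
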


\begin{proof} From Lemma~\ref{lem:rectr} and Theorem~\ref{BLvW-main} we have that each $\rtau \in \LRRCTR_{\reverse{\nu}}(\alpha \cskew \beta)$ contributes one towards the coefficient of $\qs _{\reverse{\nu}}$ in $\qs _{\alpha\cskew \beta}$, and similarly each $\rtau \in \LRRCTR_{\reverse{\nu}}(\gamma \cskew \beta)$
contributes one towards the coefficient of $\qs _{\reverse{\nu}}$ in $\qs _{\gamma\cskew \beta}$ for $\partitionof{\gamma} = \partitionof{\alpha}$. Classically by Theorem~\ref{the:LRR} we know that the number of $\rT \in \LRRTR_{\reverse{\nu}}(\lambda / \mu)$
is the coefficient of $s _\nu$ in $s _{\lambda/ \mu}$.

Furthermore by Lemma~\ref{lem:skewsasskewqs}, since $\partitionof{\alpha}=\lambda, \partitionof{\beta}=\mu$, we have that
\begin{equation}\label{eq:skewasskewqsr} s_{\lambda /\mu}=\sum _{\partitionof{\gamma}=\lambda}\qs _{\gamma \cskew \beta}\end{equation}and we already know that
$$\rhoc ^{-1} _\beta (\LRRTR(\lambda /\mu)) = \LRRCTR(\alpha \cskew \beta ) \cup \bigcup _{\partitionof{\gamma }= \lambda \atop \gamma \neq \alpha}\LRRCTR(\gamma \cskew \beta ) $$and
$$s_\nu = \qs_{\reverse{\nu}} + \sum _{\partitionof{\gamma} = \nu \atop \gamma \neq \reverse{\nu}}\qs _\gamma.$$Taking the coefficient of $\qs _{\reverse{\nu}}$ on each side of Equation~\eqref{eq:skewasskewqsr} yields the result.
\end{proof}
This gives us the following corollary.
\begin{corollary}\label{cor:stbijr} Given a partition $\nu$, standardization
restricts to a bijection
$$\stan : \LRRCTR_{\reverse{\nu}} (\alpha \cskew \beta)
\to \{\rtau \in \SRCT(\alpha \cskew \beta) \mid \rect(\rtau)=\rtau _{\reverse{\nu}}\} \:.$$
\end{corollary}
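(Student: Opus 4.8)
The plan is to reproduce the argument behind Corollary~\ref{cor:stbij} in the right-handed setting, obtaining the bijection from three facts: that $\stan$ sends the domain into the stated codomain, that $\stan$ is injective on the domain, and that the two sets are equinumerous.

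First I would verify that $\stan$ is well defined as a map into the target. For $\rtau \in \LRRCTR_{\reverse{\nu}}(\alpha \cskew \beta)$, standardization produces an $\SRCT$ of shape $\alpha \cskew \beta$ (as recorded in the excerpt), and Lemma~\ref{lem:rectr} gives $\rect(\stan(\rtau)) = \rtau_{\reverse{\nu}}$. Hence $\stan(\rtau)$ lies in $\{\rtau \in \SRCT(\alpha \cskew \beta) \mid \rect(\rtau) = \rtau_{\reverse{\nu}}\}$; this is exactly the inclusion displayed immediately before Proposition~\ref{prop:LRRcoeff}.

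Next I would establish injectivity. Standardization becomes reversible once the content is pinned down: given a standard tableau together with the target content $\reverse{\nu}$, one recovers the original $\SSRCT$ by inverting the relabelling, that is, by restoring each block of consecutive labels to its common value as dictated by $\reverse{\nu}$. Since every element of $\LRRCTR_{\reverse{\nu}}(\alpha \cskew \beta)$ has content $\reverse{\nu}$ by construction, this destandardization is unambiguous and inverts $\stan$ on the domain, so $\stan$ is injective there.

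Finally I would close the argument by counting. Proposition~\ref{prop:LRRcoeff} gives $|\LRRCTR_{\reverse{\nu}}(\alpha \cskew \beta)| = C^\alpha_{\reverse{\nu}\beta}$, and Theorem~\ref{BLvW-main}, applied with $\delta = \reverse{\nu}$, gives $C^\alpha_{\reverse{\nu}\beta} = |\{\rtau \in \SRCT(\alpha \cskew \beta) \mid \rect(\rtau) = \rtau_{\reverse{\nu}}\}|$. Thus $\stan$ is an injection between finite sets of equal cardinality whose image lies in the codomain, and such a map is automatically a bijection onto the codomain. I do not anticipate a genuine obstacle, since the combinatorial substance is already contained in Lemma~\ref{lem:rectr} and Proposition~\ref{prop:LRRcoeff}; the only step meriting care is the injectivity of $\stan$, which relies precisely on the content being held fixed at $\reverse{\nu}$ so that destandardization is well defined.
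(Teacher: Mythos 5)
Your proposal is correct and follows essentially the same route as the paper: the paper also combines the inclusion from Lemma~\ref{lem:rectr} with the cardinality identity of Proposition~\ref{prop:LRRcoeff} and Theorem~\ref{BLvW-main} to upgrade the inclusion to a bijection. The only difference is that you make explicit the injectivity of $\stan$ via destandardization at fixed content $\reverse{\nu}$, a step the paper leaves implicit.
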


Now we can give a proof of Theorem~\ref{the:NCLRR}.
\begin{proof}(of Theorem~\ref{the:NCLRR})
If $\qs _{\alpha\cskew\beta}$ is symmetric, then
the coefficient of $s_\nu$ in $\qs _{\alpha\cskew\beta}$
is the coefficient of $\qs_{\reverse{\nu}}$ in $\qs _{\alpha\cskew\beta}$
since $s_\nu = \qs_{\reverse{\nu}} + \sum _{\partitionof{\gamma} =
\nu \atop \gamma \neq \reverse{\nu}}\qs _\gamma$, and the
supports of the Schur functions in the basis of quasisymmetric Schur functions
are disjoint. The result now follows by Proposition~\ref{prop:LRRcoeff}.
\end{proof}

\section{The classification of symmetric skew quasisymmetric Schur functions}\label{sec:uniform}
The aim of this section is to use the left and the right Littlewood-Richardson
rules obtained earlier to combinatorially classify symmetric skew quasisymmetric
Schur functions. To this end, we need three lemmas and a proposition.
For the remainder of the section
fix compositions $\alpha$ and $\beta$ where $\beta <_{c} \alpha$,
$\ell(\alpha)=k+\ell$ and $\ell(\beta)=k$.
Our next lemma is a result about the shape of the tableaux obtained during the process of rectification of an SRCT.
\begin{lemma}\label{lem:insertionlength} When rectifying $\rtau \in \SRCT(\alpha \cskew \beta)$, after the insertion of the $j$-th column, to form $\rtau ^j$, no row of $\rtau ^j$ contains more than $j$ cells.
\end{lemma}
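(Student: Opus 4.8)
The plan is to prove the statement by induction on the number of columns inserted, keeping track of how the insertion procedure interacts with column indices. The key quantity to control is the length of the longest row after inserting the $j$-th column. First I would set up the induction: after inserting the first column (which consists of entries read in increasing order, each starting a new row in the leftmost column), the resulting $\rtau^1$ is a single column, so every row has exactly one cell, establishing the base case. For the inductive step, I would assume that $\rtau^{j-1}$ has no row of length exceeding $j-1$, and analyze what happens when the entries of the $j$-th column of $\rtau$ are inserted one at a time (in increasing order) to form $\rtau^j$.

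The central observation I would exploit is the scanning convention in the insertion process described before Example~\ref{ex:MRSK}: to insert a positive integer $k_1$, one begins scanning at the top of column $r+1$, where $r$ is the length of the longest row of the current tableau. By the inductive hypothesis, when we begin inserting the entries coming from column $j$ of $\rtau$, every row of the current tableau has length at most $j-1$, so $r \le j-1$ and scanning begins at column $j$ or earlier. The insertion rule then only ever places $k_1$ at the end of a row of length $j-1$ (producing a row of length $j$) or bumps an entry further left, decrementing $j$; in particular a bumped element can only ever create or extend rows strictly to the left, which are shorter. Thus a single insertion originating from column $j$ can extend a row to length at most $j$, and cannot produce a row of length $j+1$ or more. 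I would need to verify carefully that successive insertions within the same column $j$ do not compound: after the first insertion from column $j$ places an element at the end of some length-$(j-1)$ row, the longest row now has length $j$, so a subsequent insertion would start scanning at column $j+1$, and I must argue it still lands at the end of a row of length at most $j$.

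The main obstacle I expect is precisely this last point — ensuring that when several entries from the same source column $j$ are inserted in sequence, the bumping does not cascade into a row of length exceeding $j$. The resolution should come from the structure of rectification of an $\SRCT$: the entries inserted from a single column of $\rtau$ are distinct and inserted in increasing order, and because each started out in the same column of the original tableau, their insertion paths are constrained so that each one settles into a distinct row at the correct depth. I would make this precise by arguing that the second and later insertions from column $j$ cannot land at column position $j+1$ (which would require a pre-existing row of length $j$ whose last entry is weakly larger than the inserted value, contradicting the increasing order of insertion together with the column-strict structure inherited from $\rtau$), so they too terminate at column $j$ at the latest. Combining these, no row of $\rtau^j$ exceeds $j$ cells, completing the induction.
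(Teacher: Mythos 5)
Your proposal is correct and follows essentially the same route as the paper's proof: induction on the number of inserted columns, with the inductive step resting on the fact that entries of a column are inserted in increasing order, so a later insertion can never be placed in column $j+1$ since that would require it to be weakly smaller than an entry already placed in column $j$ during the same round. The concern you flag about cascading bumps is resolved exactly as the paper resolves it (implicitly), so no genuinely different ideas are needed.
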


\begin{proof} We proceed by induction on the number of columns. The result is trivially true for the first column. Now assume that the result is true for $j-1$ columns. On the insertion of the $j$-th column into $\rtau ^{j-1}$ observe that since we insert entries from smallest to largest, if $i\rightarrow \rtau _{j-1}$ results in $i$ being placed in column $j$, the subsequent insertions of larger numbers cannot result in a number being placed in column $j+1$ since row entries must weakly decrease when read from left to right. So no row of $\rtau ^j$ contains more than $j$ cells.
\end{proof}

We will now use the lemma above to prove that $\qs_{\alpha\cskew\beta}$ contains a distinguished summand in its quasisymmetric Schur function expansion. Not only is this summand easy to compute, it is also crucial for our desired classification.
\begin{proposition}\label{prop:rowfilling}
Let $\alpha \cskew \beta$ have $m$ nonempty rows. Let $\delta _i$ be the number of cells in the $i$-th nonempty row from the top in $\alpha\cskew\beta$, for $1\leq i \leq m$, and set $\delta = (\delta _1, \ldots ,\delta _m)\vDash|\alpha\cskew\beta|$. Then $\qs _{\alpha \cskew \beta}$ always contains the summand $\qs _\delta$.
\end{proposition}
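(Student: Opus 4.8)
The plan is to exhibit an explicit element of $\SRCT(\alpha\cskew\beta)$ that rectifies to $\rtau_\delta$, and then invoke Theorem~\ref{BLvW-main} via the combinatorial description \eqref{eq:nclr definition} of the coefficient $C^\alpha_{\delta\beta}$. Since that coefficient counts standard reverse composition tableaux of shape $\alpha\cskew\beta$ rectifying to $\rtau_\delta$, it suffices to produce at least one such tableau, which forces $C^\alpha_{\delta\beta}\geq 1$ and hence shows $\qs_\delta$ appears in the expansion of $\qs_{\alpha\cskew\beta}$.

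The natural candidate is the ``row-reading'' $\SRCT$ $\rtau$ obtained by filling $\alpha\cskew\beta$ so that each nonempty row receives a consecutive block of integers, assigned so that the entries decrease weakly along rows and the leftmost column increases strictly from top to bottom, mirroring the construction of the canonical tableau $\rtau_\delta$ itself. Concretely, I would fill the cells of the $i$-th nonempty row (from the top) with consecutive values summing appropriately, in decreasing order left to right, using the same convention as in the definition of $\rtau_\alpha$ but adapted to the skew shape. First I would verify this filling is a genuine $\SRCT$: the weak decrease along rows and strict increase down the leftmost column are immediate from the construction, and the triple condition must be checked, which should hold because the blocks in successive rows are ordered so that no violating configuration arises.

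The main content, and the step I expect to be the principal obstacle, is showing that $\rect(\rtau)=\rtau_\delta$. Here Lemma~\ref{lem:insertionlength} is the key tool: it guarantees that after inserting the $j$-th column during rectification, no row of the intermediate tableau $\rtau^j$ exceeds $j$ cells. I would use this length bound to argue inductively that the insertion process reconstructs precisely the row-block structure of $\rtau_\delta$, i.e.\ that the entries of the $i$-th row of $\rtau$ end up occupying the $i$-th row of the rectified tableau with no unexpected bumping across rows. The delicate point is ruling out bumps that would redistribute entries into the ``wrong'' rows; the length restriction from Lemma~\ref{lem:insertionlength}, together with the fact that within each column we insert from smallest to largest, should pin down the placement and force the final shape to be exactly the straight shape $\delta$, with the canonical filling.

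Once $\rect(\rtau)=\rtau_\delta$ is established, the proposition follows immediately: $\rtau$ witnesses $C^\alpha_{\delta\beta}\geq 1$ by \eqref{eq:nclr definition}, so $\qs_\delta$ occurs with positive coefficient in $\qs_{\alpha\cskew\beta}$. I would keep the verification of the triple condition and the rectification argument as the two technical lemmas-within-the-proof, presenting the explicit filling first so the reader can follow the inductive rectification concretely against the canonical target $\rtau_\delta$.
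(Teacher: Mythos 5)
Your proposed ``row-reading'' tableau is, in general, not an element of $\SRCT(\alpha\cskew\beta)$: the triple condition fails. Take $\alpha\cskew\beta=(2,3,3,2)\cskew(1,2,1)$, the shape of Example~\ref{ex:newLR2}, so $\delta=(2,2,1,1)$. Your canonical-style filling puts $2,1$ in row $1$, then $4,3$ in row $2$ (cells $(2,2),(2,3)$), then $5$ in row $3$ (cell $(3,3)$), then $6$ in row $4$ (cell $(4,2)$). Now apply the triple condition with $i=2$, $j=4$, $k=1$: the cell $(4,2)$ lies in $\alpha\cskew\beta$ and $(2,1)\in\beta$, so the condition requires $(2,2)\in\beta$ or $\tau(2,2)>\tau(4,2)$; but $(2,2)$ is a skew cell and $\tau(2,2)=4<6=\tau(4,2)$. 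The obstruction is structural: whenever an inner cell $(i,k)$ sits above a skew cell $(j,k+1)$, the triple condition forces the entry at $(i,k+1)$ --- in a \emph{higher} row --- to \emph{exceed} the entry at $(j,k+1)$, which is incompatible with blocks that increase down the rows. Moreover, this cannot be repaired by permuting the blocks among the rows: in this example the coefficient of $\qs_{(2,2,1,1)}$ in $\qs_{\alpha\cskew\beta}$ equals $1$, and the unique SRCT of this shape rectifying to $\tau_{(2,2,1,1)}$ (namely the standardization of the first tableau displayed in Example~\ref{ex:newLR2}) has rows $\{2,1\}$, $\{5,3\}$, $\{6\}$, $\{4\}$ --- its second row is not a consecutive block. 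So no tableau of the form you propose can witness $C^{\alpha}_{\delta\beta}\geq 1$ here. This also refutes your heuristic that insertion ``reconstructs the row-block structure with no unexpected bumping across rows'': rectification genuinely redistributes entries among rows for skew reverse composition shapes.

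This is exactly why the paper does not attempt one explicit filling of the whole shape. It splits $\alpha\cskew\beta$ into its upper shape (the top $\ell$ rows, which contain no inner cells) and its lower shape. On the upper shape the canonical filling $\tau_{\delta^u}$ is legitimate and rectifies row by row, with Lemma~\ref{lem:insertionlength} playing the role you intended for it. But on the lower shape --- precisely where your construction breaks --- the required tableau $\tau''$ is obtained \emph{non-explicitly}: the lower shape is trivially uniform, so its skew quasisymmetric Schur function is symmetric by \cite[Corollary 5.2]{BLvW}; an explicit left Littlewood--Richardson filling (built using a linear order on rows determined by the positions of their leftmost empty cells, not by top-to-bottom position) combined with Theorem~\ref{the:NCLR} shows that $\qs_{\delta^l}$ is a summand for the lower shape, and Theorem~\ref{BLvW-main} then guarantees the existence of some $\tau''\in\SRCT$ of the lower shape rectifying to $\tau_{\delta^l}$. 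The two pieces are finally glued, with every lower entry shifted above all upper entries so that the two parts cannot interact during rectification. To salvage a fully explicit construction you would need an explicit SRCT of the lower shape rectifying to $\tau_{\delta^l}$, and the example above shows it cannot have the block form you describe.
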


\begin{proof}
Observe that the proposition will follow by Theorem~\ref{BLvW-main} if we can show that there exists a $\rtau \in\SRCT(\alpha \cskew \beta)$ whose rectification is $\rtau _\delta$. Let the upper shape of $\alpha \cskew \beta$ be denoted by $\alpha \cskew \beta \mid _u$ and the lower shape of $\alpha \cskew \beta$ be denoted by $\alpha \cskew \beta \mid _l$. Furthermore, let $\delta ^u = (\delta _1, \ldots ,\delta _\ell)$ and $\delta ^l = (\delta _{\ell+1}, \ldots ,\delta _m)$.

Let $r_1, \ldots , r_t$\new{,} where $t= m-\ell$, be the rows with a nonzero number of cells in $\alpha \cskew \beta \mid _l$ that we will fill in the following construction. We will now order the rows according to how far to the right their leftmost empty cell lies. In particular, we say $r_i>r_j$ if either the leftmost empty cell in row $r_i$ lies to the right of the the leftmost empty cell in row $r_j$, or if the leftmost empty cell in both rows are in the same column but $i<j$. Thus we have a linear order on the rows with a nonzero number of cells in $\alpha\cskew \beta\mid_l$, say,
$$r_{i_t}>r_{i_{t-1}}> \cdots > r_{i_1}.$$We will now fill the cells of $\alpha \cskew \beta \mid _l$ to create $\rtau ' $ as follows.
\begin{enumerate}
\item Place the integers from $t$ down to $1$ in the leftmost empty cells of rows taken in the order $r_{i_t}$ down to $r_{i_1}$.
\item Repeat until all the cells are filled.
\end{enumerate}

By construction $\rtau ' \in \LRRCTL (\alpha \cskew \beta \mid _l)$ once we confirm that $\rtau '$ is an $\SSRCT$. First note that after every iteration, the linear order on the rows is preserved, but the chain may decrease in length. Hence in every application of the first step, the entries in the rows constructed will weakly decrease when read from left to right. By construction there are no entries in the first column, thus we only need confirm that the triple condition always holds once all the cells are filled.

Consider rows $r_i$ and $r_j$ where $r_i$ is a row higher in $\alpha \cskew \beta \mid _l$ than $r_j$. Let $r_j$ have a cell in column $c$ containing entry $\rtau ' (r_j, c)$. There are two cases to consider. If $r_i > r_j$ then the leftmost cell of $r_i$ in the outer shape is weakly right of the leftmost cell of $r_j$ in the outer shape. Thus by the cover relations in $\Lc$ if $r_i$ has a cell in column $c-1$, then $r_i$ has a cell in column $c$. Let their respective entries be $\rtau ' (r_i, c-1) $ and $\rtau ' (r_i, c) $.
By construction we have $\rtau ' (r_j, c)<\rtau ' (r_i, c)$. If $r_i < r_j$ then the leftmost cell of $r_i$ in the outer shape is strictly left of the leftmost cell of $r_j$ in the outer shape, and hence by construction we have $\rtau ' (r_j, c)>\rtau ' (r_i, c-1)$, if there exists a cell in row $r_i$ and column $c-1$ with entry $\rtau ' (r_i, c-1)$. Thus $\rtau '$ is an $\SSRCT$ and hence $\rtau ' \in \LRRCTL (\alpha \cskew \beta \mid _l)$. So by \cite[Corollary 5.2]{BLvW} and Theorem~\ref{the:NCLR}, $\qs _{\partitionof{\delta ^l}}$ and hence $\qs _{\delta ^l}$ is a summand of $\qs _{\alpha \cskew \beta \mid _l}$. In particular, by Theorem~\ref{BLvW-main} this implies that there exists a $\rtau '' \in \SRCT(\alpha \cskew \beta \mid _l)$ that rectifies to $\rtau _{\delta ^l}$.

Now consider $\rtau\in \SRCT(\alpha \cskew \beta)$ whose upper shape $\alpha \cskew \beta \mid _u$ consisting of the first $\ell$ rows is filled as the canonical reverse composition tableau of shape $\delta^u$, and the remaining nonempty rows are those of $\rtau '' $ with $\vert \delta^u\vert$ added to each entry.
We claim that the rectification of $\rtau$ is $\rtau _\delta$.

Observe the entries in the top $\ell$ rows trivially rectify to $\rtau _{\delta ^u}$, since by our insertion procedure and Lemma~\ref{lem:insertionlength} the leftmost column when inserted will be $\delta ^u _1,  \delta ^u _1 + \delta ^u _2, \ldots $ and, thereafter, if $i$ is in column $j$ of $\rtau$ to the immediate right of $i+1$ then $i$ will be placed in column $j$ to the immediate right of $i+1$ during rectification. Since every entry in the remaining rows of $\rtau$ is larger than those in the first $\ell$ rows, they can never be placed in the same row as any number appearing in the first $\ell$ rows of $\rtau$ during rectification. Hence since $\rtau ''$ rectifies to $\rtau _{\delta ^l}$, the remaining rows will rectify to $\rtau _{\delta ^l}$ with $\vert \delta^u \vert$ added to each entry.

Hence $\rtau $ rectifies to an $\SRCT$ whose top $\ell$ rows are $\rtau _{\delta ^u }$ and whose remaining rows are $\rtau _{\delta ^l}$ with $\vert \delta^u\vert$ added to each entry, that is, $\rtau _\delta$.\end{proof}

Our next lemma shows that Theorem~\ref{the:NCLR} places a constraint on what the upper shape of $\alpha\cskew\beta$ can be.
\begin{lemma}\label{lem:sympart} If $\qs _{\alpha \cskew \beta}$ is symmetric then the upper shape of $\alpha \cskew \beta$ is a partition, that is, $\alpha _1 \geq \cdots \geq \alpha _\ell$.
\end{lemma}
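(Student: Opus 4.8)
The plan is to argue the contrapositive: assuming the upper shape is not a partition, I will show $\qs_{\alpha\cskew\beta}$ fails to be symmetric. First I would record the form symmetry must take in the quasisymmetric Schur basis. Since $s_\nu=\qs_\nu+\sum_{\partitionof{\gamma}=\nu,\,\gamma\neq\nu}\qs_\gamma$ and the supports of distinct Schur functions in this basis are disjoint, a quasisymmetric Schur positive function is symmetric if and only if, for every composition $\gamma$, the coefficient of $\qs_\gamma$ depends only on the underlying partition $\partitionof{\gamma}$; equivalently, the number of contributing $\SSRCT$s of shape $\alpha\cskew\beta$ of a given content is invariant under rearranging that content. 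It is this rearrangement-invariance that I will violate.

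Next I would bring in the distinguished summand. By Proposition~\ref{prop:rowfilling}, $\qs_{\alpha\cskew\beta}$ always contains $\qs_\delta$, where $\delta=(\delta_1,\dots,\delta_m)$ lists the lengths of the nonempty rows from the top; since the top $\ell$ rows are full rows of the outer shape, its first $\ell$ parts are exactly $(\delta_1,\dots,\delta_\ell)=(\alpha_1,\dots,\alpha_\ell)$. Suppose, for contradiction, that these are not weakly decreasing, and fix $p<\ell$ with $\alpha_p<\alpha_{p+1}$. Let $\delta'$ be obtained from $\delta$ by interchanging the parts in positions $p$ and $p+1$, so that $\partitionof{\delta'}=\partitionof{\delta}$ but $\delta'\neq\delta$. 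If $\qs_{\alpha\cskew\beta}$ were symmetric, the coefficient of $\qs_{\delta'}$ would equal that of $\qs_\delta$, which is at least $1$. I would contradict this by showing the coefficient of $\qs_{\delta'}$ is in fact $0$; by Theorem~\ref{BLvW-main} this amounts to proving that no $\SRCT$ of shape $\alpha\cskew\beta$ rectifies to $\rtau_{\delta'}$, equivalently that no $\SSRCT$ of shape $\alpha\cskew\beta$ has content $\delta'$.

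The heart of the matter, and the step I expect to be the main obstacle, is establishing this non-realizability. The mechanism is that reverse composition tableaux are \emph{not} content-rearrangement symmetric: the leftmost column of $\alpha\cskew\beta$ meets precisely the $\ell$ top rows and increases strictly downward, rows weakly decrease to the right, and these interact through the triple condition. Converting the ascent into a descent demands $\alpha_{p+1}$ copies of one value and only $\alpha_p<\alpha_{p+1}$ copies of the next larger value; I would show this cannot be accommodated, since the strictly increasing leftmost column of height $\ell$ pins the values entering the top rows, weak decrease bounds how many equal copies a row of length $\alpha_p$ can carry, and the triple condition links the rows so that the required reversal is obstructed, exactly as in the smallest instance where the upper shape is $(1,2)$: there $\qs_{(1,2)}$ contains $\qs_{(1,2)}$ but not $\qs_{(2,1)}$, because the triple condition forbids the filling realizing content $(2,1)$. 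Tracking the leftmost column through the rectification and invoking the row-length bound of Lemma~\ref{lem:insertionlength} should make this precise. Should the direct count prove unwieldy, an alternative route to the same contradiction is to feed $\nu=\partitionof{\delta}$ into Theorem~\ref{the:NCLR}: symmetry forces $|\LRRCTL_{\partitionof{\delta}}(\alpha\cskew\beta)|\geq1$, and analyzing the leftmost column together with the lattice-type placement condition of such a tableau should force $\alpha_1\geq\cdots\geq\alpha_\ell$ directly.
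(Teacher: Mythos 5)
Your logical frame is sound: under symmetry the coefficients in the quasisymmetric Schur expansion of $\qs_{\alpha\cskew\beta}$ must be invariant under rearranging the indexing composition, Proposition~\ref{prop:rowfilling} makes the coefficient of $\qs_\delta$ at least $1$, and if no $\SSRCT$ of shape $\alpha\cskew\beta$ had content $\delta'$ then the coefficient of $\qs_{\delta'}$ would indeed vanish (since $\qs_{\delta'}$ contributes the monomial $x^{\delta'}$ with coefficient $1$ and all expansions involved are nonnegative), giving the desired contradiction. One small imprecision: your ``equivalently'' is only an implication -- non-existence of content-$\delta'$ fillings forces $C^{\alpha}_{\delta'\beta}=0$, but $C^{\alpha}_{\delta'\beta}=0$ does not preclude content-$\delta'$ fillings, whose monomials can come from other terms $\qs_\gamma$; fortunately the direction you need is the valid one.

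The genuine gap is the central non-realizability claim, which you yourself flag as ``the main obstacle'' and never establish. This claim is strictly harder than anything the paper proves for this lemma, because the paper never analyzes arbitrary $\SSRCT$s of a prescribed content: its proof works entirely inside the rigid class $\LRRCTL_\nu(\alpha\cskew\beta)$, where the lattice-type placement condition forces the leftmost column to read $1,\ldots,\ell$ and then forces row $i$ of the upper shape to be constantly $i$; an ascent $\alpha_{j-1}<\alpha_j$ then makes it impossible for the $(\alpha_{j-1}+1)$-th $j-1$ to lie weakly left of the $(\alpha_{j-1}+1)$-th $j$, since the triple condition bars a $j-1$ from the lower shape in column $\alpha_{j-1}+1$. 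Hence $\LRRCTL_\nu(\alpha\cskew\beta)=\emptyset$ for \emph{every} partition $\nu$, so by Theorem~\ref{the:NCLR} and symmetry $\qs_{\alpha\cskew\beta}=0$, contradicting Proposition~\ref{prop:rowfilling}. Arbitrary $\SSRCT$s of content $\delta'$ enjoy none of that rigidity: once the lower shape is nonempty, the leftmost column need not contain $1,\ldots,\ell$, small entries can sit in lower-shape rows, and the interaction of the triple condition across upper and lower rows is exactly what must be controlled -- your toy case $(1,2)$, with no lower shape, does not exhibit this difficulty. The claim is plausibly true (it checks out in small cases), but proving it requires an argument of comparable weight to the paper's proof of Lemma~\ref{lem:symrevpart} (chains of strict inequalities read off row ends, column by column), which you do not supply; ``the triple condition links the rows so that the required reversal is obstructed'' is a restatement of what needs proof, not a proof. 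Your fallback -- feeding $\nu=\partitionof{\delta}$ into Theorem~\ref{the:NCLR} and showing $\LRRCTL_{\partitionof{\delta}}(\alpha\cskew\beta)=\emptyset$ -- is precisely the paper's route, but you leave its key analysis at ``should force,'' so as written the proposal is not a complete proof by either route.
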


\begin{proof} Assume, for the sake of contradiction, that $\qs _{\alpha \cskew \beta}$ is symmetric but the upper shape of $\alpha \cskew \beta$ is not a partition.

We will now try to construct $\rtau \in\LRRCTL_\nu (\alpha \cskew \beta)$ for some partition $\nu$. \new{By definition, the leftmost $i$ has to be weakly left of the leftmost $i+1$. Thus,} if we have such a $\rtau$, then by construction we know the leftmost column read from top to bottom must read $1, \ldots, \ell$. Then the second left must contain entries $1, \ldots ,\ell _2$ for some $\ell _2 \leq \ell$, the third left must contain entries $1, \ldots ,\ell _3$ for some $\ell _3 \leq \ell _2$ and so on. By the definition of $\SSRCT$s it follows that the entries in row $i$ for $1\leq i \leq \ell$ will be equal to $i$. Furthermore, since the upper shape of $\alpha \cskew \beta$ is not a partition, there exists some $\alpha _j >\alpha _{j-1}$ ($j$ minimal) for $2\leq j\leq \ell$. Note an $(\alpha _{j-1}+1)$-th $j-1$ from the left cannot appear weakly left of the $(\alpha _{j-1}+1)$-th $j$ from the left, as the definition of $\SSRCT$s guarantees that this $j-1$ cannot belong to the lower shape in column $\alpha _{j-1}+1$.

Thus, if the upper shape of $\alpha \cskew \beta$ is not a partition then it is not possible to construct $\rtau \in \LRRCTL_\nu (\alpha \cskew \beta)$ for some partition $\nu$. Hence by Theorem~\ref{the:NCLR} the coefficient of $s_\nu$ in $\qs _{\alpha \cskew \beta}$ is zero for all partitions $\nu$, and hence $\qs _{\alpha \cskew \beta}=0$. However, by Proposition~\ref{prop:rowfilling} $\qs _{\alpha \cskew \beta}\neq0$, a contradiction. \end{proof}
On the other hand, as the following lemma shows, using Theorem~\ref{the:NCLRR} we get a different constraint on what the upper shape of $\alpha\cskew \beta$ can be.
\begin{lemma}\label{lem:symrevpart} If $\qs _{\alpha \cskew \beta}$ is symmetric then the upper shape of $\alpha \cskew \beta$ is the reverse of a partition, that is, $\alpha _1 \leq \cdots \leq \alpha _\ell$.
\end{lemma}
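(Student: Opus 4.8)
The plan is to prove Lemma~\ref{lem:symrevpart} as a direct mirror of the proof of Lemma~\ref{lem:sympart}, with Theorem~\ref{the:NCLRR} replacing Theorem~\ref{the:NCLR} and the reverse-lattice condition read \emph{from the right} replacing the one read from the left. Assume, for the sake of contradiction, that $\qs_{\alpha\cskew\beta}$ is symmetric but the upper shape is not the reverse of a partition, so that $\alpha_i>\alpha_{i+1}$ for some $1\le i\le \ell-1$. Exactly as in Lemma~\ref{lem:sympart}, it suffices to show that $\LRRCTR_{\reverse{\nu}}(\alpha\cskew\beta)=\varnothing$ for \emph{every} partition $\nu$: then by Theorem~\ref{the:NCLRR} the coefficient of $s_\nu$ in $\qs_{\alpha\cskew\beta}$ vanishes for all $\nu$, forcing $\qs_{\alpha\cskew\beta}=0$, which contradicts Proposition~\ref{prop:rowfilling}.

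The first task is to analyse the forced shape, on the upper shape, of any hypothetical $\rtau\in\LRRCTR_{\reverse{\nu}}(\alpha\cskew\beta)$. Since every part of $\beta$ is positive, the leftmost column of $\alpha\cskew\beta$ consists precisely of the first cells of the $\ell$ upper rows, so by condition~(2) in the definition of $\SSRCT$ these entries strictly increase from top to bottom; write them $w_1<\cdots<w_\ell$. Using that rows weakly decrease from left to right, together with the defining condition of $\LRRCTR$ that ``the $k$-th $j+1$ from the right is weakly right of the $k$-th $j$ from the right,'' one shows that each upper row $p$ is filled entirely with $w_p$, that $w_{p+1}=w_p+1$ for $1\le p\le\ell-1$, and that none of these $\ell$ values is repeated in the lower shape. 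This is the exact dual of the assertion in Lemma~\ref{lem:sympart} that the leftmost column reads $1,\dots,\ell$ and row~$p$ is filled with~$p$; the difference is only that here the values $w_1,\dots,w_\ell$ form a consecutive block whose position in the value range is dictated by the lower shape rather than being pinned to $\{1,\dots,\ell\}$.

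Granting this structure, the contradiction is the $k=1$ instance of the reverse-lattice condition for the two upper rows $i$ and $i+1$ at our descent. Row $i$ carries $w_i$ in columns $1,\dots,\alpha_i$ and row $i+1$ carries $w_{i+1}=w_i+1$ in columns $1,\dots,\alpha_{i+1}$, and since these values do not recur in the lower shape, the rightmost $w_i+1$ lies in column $\alpha_{i+1}$ while the rightmost $w_i$ lies in column $\alpha_i$. The condition requires the rightmost $w_i+1$ to be weakly right of the rightmost $w_i$, i.e.\ $\alpha_{i+1}\ge\alpha_i$, contradicting $\alpha_i>\alpha_{i+1}$. Hence $\LRRCTR_{\reverse{\nu}}(\alpha\cskew\beta)=\varnothing$ for all $\nu$, and the proof closes as in Lemma~\ref{lem:sympart}.

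The main obstacle is the middle step: rigorously establishing the forced structure of the upper shape. In the left rule the smallest values are pushed to the upper-left, so the leftmost column is immediately seen to read $1,\dots,\ell$; here, because the reverse-lattice condition is read from the right, the upper rows need not carry $1,\dots,\ell$, and I expect the delicate points to be proving that the upper rows are \emph{constant}, that the values $w_1<\cdots<w_\ell$ are genuinely \emph{consecutive integers}, and that no intermediate value is syphoned off into the lower shape. These are precisely the facts that require careful bookkeeping with the triple condition together with the reverse reading word, and once they are in place the rest of the argument is the formal dual of Lemma~\ref{lem:sympart}.
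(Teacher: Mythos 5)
Your reduction and your structural claim both fail; this lemma is genuinely \emph{not} the formal dual of Lemma~\ref{lem:sympart}, which is precisely why the paper proves it by a different route. First, the structure you claim to force on the upper rows is false: the first tableau in the paper's own Example~\ref{ex:newLRR2} is an element of $\LRRCTR((2,3,3,2)\cskew(1,2,1))$ whose unique upper row reads $3,2$ --- not constant --- and whose upper-row value $3$ recurs in the lower shape. The duality with the left rule breaks because the diagram is left-justified: in $\LRRCTL$ the leftmost occurrences of $1,2,\ldots$ are pinned to the leftmost column, which consists exactly of the top $\ell$ cells, whereas in $\LRRCTR$ the relevant rightmost occurrences sit at the ragged right ends of rows and may lie in the lower shape arbitrarily far to the right, so nothing anchors the values of the upper rows.

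Second, and fatally, the statement you reduce to --- that if the upper shape is not the reverse of a partition then $\LRRCTR_{\reverse{\nu}}(\alpha\cskew\beta)=\varnothing$ for \emph{every} partition $\nu$ --- is itself false, so no amount of care in the middle step can rescue the plan. Take $\alpha\cskew\beta=(2,1,3)\cskew(2)$, where $(2)\lessdot_c(1,2)\lessdot_c(1,3)\lessdot_c(1,1,3)\lessdot_c(2,1,3)$, so the shape is legitimate and its upper shape $(2,1)$ is not the reverse of a partition. Filling the two cells of row $1$ with $1$s, the cell $(2,1)$ with $2$, and the lone lower-shape cell $(3,3)$ with $2$ yields a valid $\SSRCT$; its right-to-left column reading word is $2,1,2,1$, and the cells are placed along covers of $\Lc$, so this filling lies in $\LRRCTR_{(2,2)}((2,1,3)\cskew(2))\neq\varnothing$. (The rightmost $2$ hides in the lower shape --- exactly the escape your argument rules out.) Consequently you cannot conclude $\qs_{\alpha\cskew\beta}=0$; the symmetry hypothesis must enter the combinatorial argument, not just the final contradiction. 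The paper's proof does exactly this: it establishes emptiness only for the \emph{single} content $\reverse{\partitionof{\delta}}$, where $\delta$ is the row-length composition of Proposition~\ref{prop:rowfilling}. That content is special because the first part of the transpose of $\partitionof{\delta}$ equals the number $m$ of nonempty rows, so the reverse reading condition forces the entries ending the rows to read $m,m-1,\ldots,1$ from right to left, which clashes with the strictly increasing leftmost column once some $\alpha_j<\alpha_{j-1}$; symmetry together with Proposition~\ref{prop:rowfilling} then forces the coefficient of $\qs_{\reverse{\partitionof{\delta}}}$ to be nonzero, giving the contradiction.
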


\begin{proof} Assume, for the sake of contradiction, that $\qs _{\alpha \cskew \beta}$ is symmetric but the upper shape of $\alpha \cskew \beta$ is not the reverse of a partition. By Proposition~\ref{prop:rowfilling}, if $\alpha \cskew \beta$ has $m$ nonempty rows then $\qs _{\alpha \cskew \beta}$ contains the summand $\qs _\delta$ where $\delta = (\delta _1, \ldots, \delta _m) \vDash |\alpha \cskew \beta|$, and $\delta _i$ for $1\leq i \leq m$ is the number of cells in the $i$-th nonempty row from the top in $\alpha \cskew \beta$. Note that $\delta$ is not the reverse of a partition since the upper shape of $\alpha \cskew \beta$ is not the reverse of a partition. Since $\qs _{\alpha \cskew \beta}$ is symmetric it follows that the coefficient of $\qs _{\reverse{\partitionof{\delta}}}$ in $\qs _{\alpha \cskew \beta}$ is nonzero.

We will now try to construct $\rtau \in \LRRCTR (\alpha \cskew \beta)$ with content $\reverse{\partitionof{\delta}}$. Consider the transpose of $\partitionof{\delta}$, $\varepsilon = (\varepsilon _1, \ldots, \varepsilon _{\partitionof{\delta}_1})$, where $\varepsilon _1 = m$. Then if such a $\rtau$ exists, as the entries in the rows weakly decrease from left to right, and the $k$-th $i+1$ from the right appears weakly right of the $k$-th $i$ from the right it follows that as we traverse the columns of $\alpha \cskew \beta$ from right to left reading the entries at the end of each row, we read $m, m-1, \ldots, 3,2,1$ (where if more than one row ends in the same column we read the relevant entries from largest to smallest). By the same argument, it follows that as we traverse the columns of $\alpha \cskew \beta$ from right to left reading the $k$-th entries from the end of each row, we read $m, m-1, \ldots, m+1-\varepsilon_k$.

Since the upper shape of $\alpha \cskew \beta$ is not the reverse of a partition, there exists some $\alpha _j<\alpha _{j-1}$ ($j$ minimal) for $2\leq j\leq \ell$ and as we traverse the columns of $\alpha \cskew \beta$ from right to left reading the $\alpha _j$-th entries from the end of each row, we observe that the leftmost cell in row $j$ is $m+1-\varepsilon _{\alpha _j}$. We also observe that the $\alpha _j$-th entry from the end of row $j-1$ is strictly greater than $m+1-\varepsilon _{\alpha _j}$. Thus since row entries weakly decrease from left to right the leftmost cell in row $j-1$ is strictly greater than $m+1-\varepsilon _{\alpha _j}$.

Hence no such $\rtau$ exists since entries in the leftmost column must increase when read from top to bottom by the definition of an $\SSRCT$. Thus, by Theorem~\ref{the:NCLRR}, the coefficient of $\qs _{\reverse{\partitionof{\delta}}}$ in $\qs _{\alpha \cskew \beta}$ is zero, a contradiction. \end{proof}

Now, using Lemmas \ref{lem:sympart} and \ref{lem:symrevpart} in conjunction readily yields the following theorem, which combinatorially classifies symmetric skew quasisymmetric Schur functions.
\begin{theorem}\label{the:uniform} $\qs _{\alpha \cskew \beta}$ is symmetric if and only if $\alpha \cskew \beta$ is uniform.
\end{theorem}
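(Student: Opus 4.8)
The plan is to prove the two directions of the equivalence separately, and in fact nearly all of the work has already been assembled in the lemmas immediately preceding the statement. For the reverse implication, that a uniform shape yields a symmetric function, I would simply invoke \cite[Corollary 5.2]{BLvW}, where exactly this was established: if $\alpha \cskew \beta$ is uniform then $\qs_{\alpha\cskew\beta}$ is symmetric. No new argument is needed here, and indeed this direction was never the content of the conjecture we aim to settle.

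For the forward implication, the key observation is that Lemma~\ref{lem:sympart} and Lemma~\ref{lem:symrevpart} pull in opposite directions. Assuming $\qs_{\alpha\cskew\beta}$ is symmetric, Lemma~\ref{lem:sympart} forces the upper shape to be a partition, that is $\alpha_1 \geq \cdots \geq \alpha_\ell$, while Lemma~\ref{lem:symrevpart} forces it to be the reverse of a partition, that is $\alpha_1 \leq \cdots \leq \alpha_\ell$. These two chains of inequalities hold simultaneously precisely when $\alpha_1 = \cdots = \alpha_\ell$, which is exactly the condition that the upper shape is a rectangle, i.e.\ that $\alpha\cskew\beta$ is uniform. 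This completes the forward direction, and the two implications together give the stated equivalence.

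The conceptual point worth flagging is \emph{why} two separate Littlewood-Richardson rules are required rather than one. Each of Lemma~\ref{lem:sympart} and Lemma~\ref{lem:symrevpart} extracts only a one-sided constraint on the upper shape: the left rule of Theorem~\ref{the:NCLR} detects the obstruction to the weakly-decreasing condition on the $\alpha_i$, whereas the right rule of Theorem~\ref{the:NCLRR} detects the obstruction to the weakly-increasing condition. It is the interplay of both rules, applied to the distinguished summand $\qs_\delta$ guaranteed by Proposition~\ref{prop:rowfilling}, that squeezes the upper shape into a genuine rectangle. Consequently there is no real obstacle remaining at this stage: the theorem is a short corollary of the two lemmas. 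The true difficulty has already been absorbed into proving those lemmas (and into establishing the two rules and the existence of the summand $\qs_\delta$ in the first place), not into combining them here.
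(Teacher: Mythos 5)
Your proposal is correct and follows exactly the paper's own proof: the uniform-implies-symmetric direction is cited from \cite[Corollary 5.2]{BLvW}, and the converse combines Lemma~\ref{lem:sympart} ($\alpha_1 \geq \cdots \geq \alpha_\ell$) with Lemma~\ref{lem:symrevpart} ($\alpha_1 \leq \cdots \leq \alpha_\ell$) to force $\alpha_1 = \cdots = \alpha_\ell$. Your closing remarks on why both Littlewood-Richardson rules are needed are accurate but are commentary rather than additional proof content.
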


\begin{proof}
If $\alpha \cskew \beta$ is uniform then $\qs _{\alpha \cskew \beta}$ is symmetric \cite[Corollary 5.2]{BLvW}. Conversely, let $\qs _{\alpha \cskew \beta}$ be symmetric. Then by Lemma~\ref{lem:sympart} the upper shape of $\alpha \cskew \beta$ is a partition, that is, $\alpha _1 \geq \cdots \geq \alpha _\ell$. Meanwhile by Lemma~\ref{lem:symrevpart} the upper shape of $\alpha \cskew \beta$ is the reverse of a partition, that is, $\alpha _1 \leq \cdots \leq \alpha _\ell$. Hence
$$\alpha _1 = \cdots = \alpha _\ell,$$that is, $\alpha \cskew \beta$ is uniform.
\end{proof}
We also obtain the following corollary about \new{one place} where the symmetry is broken in nonsymmetric skew quasisymmetric Schur functions, with which we conclude.
\begin{corollary}\label{cor:nonsym} Let $\qs _{\alpha \cskew \beta}$ be a nonsymmetric skew quasisymmetric Schur function, $\alpha \cskew \beta$ have $m$ nonempty rows and $\delta = (\delta _1, \ldots ,\delta _m)$ be the composition of $|\alpha \cskew \beta|$ where $\delta _i$ is the number of cells in the $i$-th nonempty row from the top in $\alpha \cskew \beta$, for $1\leq i \leq m$. Then\cnew{, 
 while $\qs _\delta$ appears,} not all terms $\qs _\gamma$ where $\tilde{\delta}=\tilde{\gamma}$ appear in the quasisymmetric Schur function expansion of $\qs _{\alpha \cskew \beta}$.
\end{corollary}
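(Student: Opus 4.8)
The plan is to prove the two assertions of the corollary separately, drawing on the machinery already in place. The appearance of $\qs_\delta$ is immediate: it is exactly the content of Proposition~\ref{prop:rowfilling}, since $\delta=(\delta_1,\ldots,\delta_m)$ records the lengths of the nonempty rows of $\alpha\cskew\beta$ read from top to bottom. Thus the coefficient of $\qs_\delta$ is at least one, and it remains only to exhibit a single composition $\gamma$ with $\partitionof{\gamma}=\partitionof{\delta}$ for which $\qs_\gamma$ does not appear.

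The engine for the second assertion will be the two \emph{unconditional} coefficient formulas, Proposition~\ref{prop:LRcoeff} and Proposition~\ref{prop:LRRcoeff}, which compute the coefficient of $\qs_\nu$ for $\nu$ a partition and of $\qs_{\reverse{\nu}}$ for $\reverse{\nu}$ the reverse of a partition, with no symmetry hypothesis on $\qs_{\alpha\cskew\beta}$. Since $\qs_{\alpha\cskew\beta}$ is nonsymmetric, Theorem~\ref{the:uniform} tells us that $\alpha\cskew\beta$ is not uniform, so the upper shape $(\alpha_1,\ldots,\alpha_\ell)$ is not constant. A nonconstant sequence cannot be simultaneously weakly decreasing and weakly increasing, so the upper shape either fails to be a partition or fails to be the reverse of a partition; these two possibilities furnish the two cases.

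In the first case, where the upper shape is not a partition, I would invoke the construction-impossibility established inside the proof of Lemma~\ref{lem:sympart}: there is no element of $\LRRCTL_\nu(\alpha\cskew\beta)$ for any partition $\nu$. The crucial point to record is that this impossibility is a statement purely about the shape $\alpha\cskew\beta$ and is proved there without reference to symmetry, the symmetry hypothesis having been used only at the final contradiction step. Taking $\nu=\partitionof{\delta}$ and applying Proposition~\ref{prop:LRcoeff} shows that the coefficient of $\qs_{\partitionof{\delta}}$ is zero, so $\gamma=\partitionof{\delta}$ is the desired missing term. In the second case, where the upper shape is not the reverse of a partition, I would argue analogously using the proof of Lemma~\ref{lem:symrevpart}, which shows $\LRRCTR_{\reverse{\partitionof{\delta}}}(\alpha\cskew\beta)=\varnothing$ again independently of symmetry; Proposition~\ref{prop:LRRcoeff} then forces the coefficient of $\qs_{\reverse{\partitionof{\delta}}}$ to be zero, so $\gamma=\reverse{\partitionof{\delta}}$ works. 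In either case $\partitionof{\gamma}=\partitionof{\delta}$, and since $\qs_\delta$ does appear while $\qs_\gamma$ does not, not all terms indexed by rearrangements of $\partitionof{\delta}$ can appear.

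The main obstacle is not combinatorial but bookkeeping: the entire argument hinges on noticing that the emptiness of the relevant sets of Littlewood-Richardson reverse composition tableaux is genuinely established in Lemmas~\ref{lem:sympart} and~\ref{lem:symrevpart} \emph{before} symmetry is invoked, so that it may legitimately be combined with the symmetry-free formulas of Propositions~\ref{prop:LRcoeff} and~\ref{prop:LRRcoeff}. Once this is granted, the result is a clean two-case deduction requiring no new calculation.
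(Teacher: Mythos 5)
Your proof is correct and takes essentially the same route as the paper's: Proposition~\ref{prop:rowfilling} gives the appearance of $\qs_\delta$, Theorem~\ref{the:uniform} gives non-uniformity of the upper shape, and the two cases are dispatched by invoking the symmetry-free construction-impossibility arguments inside the proofs of Lemma~\ref{lem:sympart} (killing $\qs_{\partitionof{\delta}}$) and Lemma~\ref{lem:symrevpart} (killing $\qs_{\reverse{\partitionof{\delta}}}$). The only difference is cosmetic: you make explicit, via Propositions~\ref{prop:LRcoeff} and~\ref{prop:LRRcoeff}, the symmetry-free link between emptiness of the Littlewood-Richardson sets and vanishing of the quasisymmetric Schur coefficients, a step the paper leaves implicit.
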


\begin{proof} If $\qs _{\alpha \cskew \beta}$ is a nonsymmetric skew quasisymmetric Schur function, then by Theorem~\ref{the:uniform} we know that for some $j\in \{2,\ldots,\ell\}$ we have $\alpha _j>\alpha _{j-1}$ or $\alpha _j < \alpha _{j-1}$,
and we choose $j$ to be minimal. By Proposition~\ref{prop:rowfilling} we know that $\qs_\delta$ is a summand in the quasisymmetric Schur function expansion of $\qs _{\alpha \cskew \beta}$. If $\alpha _j>\alpha _{j-1}$ then by the proof of Lemma~\ref{lem:sympart}, specialized to content $\partitionof{\delta}$, the term $\qs _{\partitionof{\delta}}$ does not appear in the expansion. If $\alpha _j<\alpha _{j-1}$ then by the proof of Lemma~\ref{lem:symrevpart} the term $\qs _{\reverse{\partitionof{\delta}}}$ does not appear in the expansion. \end{proof}

\section*{Acknowledgement\cnew{s}}
The second and third authors would like to thank the Institut f{\"u}r Algebra, Zahlentheorie und Diskrete Mathematik of Leibniz Universit\"at for its hospitality and for providing a stimulating venue to conduct part of this research.
\cnew{We also thank the referees for helpful comments and suggestions.}


\begin{thebibliography}{10}
\bibitem{AguiarBergeronSottile}
{\sc M.~Aguiar, N.~Bergeron, and F.~Sottile}, {\em {Combinatorial Hopf algebras
 and generalized Dehn-Sommerville relations}},
Compos. Math.
142 (2006)
1--30.

\bibitem{berenstein-zel}
{\sc A.~Berenstein, and A.~Zelevinsky}, {\em Triple multiplicities for {${\rm
  sl}(r+1)$} and the spectrum of the exterior algebra of the adjoint representation},
J. Algebraic Combin.
1 (1992)
7--22.

\bibitem{BLvW}
{\sc C.~Bessenrodt, K.~Luoto, and S.~van Willigenburg},
{\em Skew quasisymmetric Schur functions and noncommutative Schur functions},
Adv. Math.
226 (2011)
 4492--4532.

\bibitem{BilleraBrenti}
{\sc L.~Billera, and F.~Brenti},
{\em Quasisymmetric functions and {K}azhdan-{L}usztig polynomials},
Israel J. Math.
184 (2011)
317--348.

 \bibitem{BilleraHsiaovanWilligenburg}
{\sc L.~Billera, S.~Hsiao, and S.~van Willigenburg}, {\em {Peak quasisymmetric
 functions and Eulerian enumeration}},
Adv. Math.
176 (2003)
248--276.

\bibitem{FominGreene}
{\sc S.~Fomin, and C.~Greene},
{\em A {L}ittlewood-{R}ichardson miscellany},
European J. Combin.
14 (1993)
191--212.

\bibitem{fulton-1}{\sc W.~Fulton}, {\em {Young Tableaux}}, {Cambridge University Press}, 1997.

\bibitem{Gasharov}
{\sc V.~Gasharov}, {\em A short proof of the {L}ittlewood-{R}ichardson rule}, European J. Combin. 19 (1998) 451--453.

\bibitem{Gessel}
{\sc I.~Gessel}, {\em {Multipartite P-partitions and inner products of skew
 Schur functions}}.
\newblock {Combinatorics and algebra, Proc. Conf., Boulder/Colo. 1983,} Contemp.
 Math. 34 (1984) 289--301.

\bibitem{GesselReutenauer}
{\sc I.~Gessel, and C.~Reutenauer},
{\em Counting permutations with given cycle structure and descent set},
J. Combin. Theory Ser. A
64 (1993)
189--215.

\bibitem{HHL-1}
{\sc J.~Haglund, M.~Haiman, and N.~Loehr}, {\em {A combinatorial formula for
 Macdonald polynomials}},
J. Amer. Math. Soc.
18 (2005)
735--761.

\bibitem{HLMvW11}
{\sc J.~Haglund, K.~Luoto, S.~Mason, and S.~van Willigenburg},
{\em Quasisymmetric Schur functions},
J. Combin. Theory Ser. A
118 (2011)
463--490.


\bibitem{HLMvW11a}
{\sc J.~Haglund, K.~Luoto, S.~Mason, and S.~van Willigenburg},
{\em Refinements of the Littlewood-Richardson rule},
Trans. Amer. Math. Soc.
363 (2011)
1665--1686.


\bibitem{hivert-1}
{\sc F.~Hivert}, {\em {Hecke algebras, difference operators, and
 quasi-symmetric functions}},
 Adv. Math.
155 (2000)
181--238.

\bibitem{Knutson-Tao}
{\sc A.~Knutson, and T.~Tao}, {\em The honeycomb model of {${\rm GL}_n({\bf
  C})$} tensor products. {I}. {P}roof of the saturation conjecture},
J. Amer. Math. Soc.
12 (1999)
1055--1090.

\bibitem{Kreiman}{\sc V.~Kreiman},
{\em Equivariant Littlewood-Richardson skew tableaux},
Trans. Amer. Math. Soc.
362 (2010)
2589--2617.

\bibitem{LascouxSchutzenberger}
{\sc A.~Lascoux, and M.-P.~Sch{\"u}tzenberger}
{\em {Le mono\"\i de plaxique, Noncommutative structures in algebra and geometric
              combinatorics ({N}aples, 1978)}}, Quad. ``Ricerca Sci.'' Vol. 109, CNR, Rome, 1981, 129--156.


\bibitem{LR-1}
{\sc D.~Littlewood, and A.~Richardson}, {\em Group characters and algebra},
  Philos. Trans.  R. Soc. Ser. A
233 (1934)
99--141.

\bibitem{QSbook}
{\sc K.~Luoto, S.~Mykytiuk, and S.~van Willigenburg},
{\em An introduction to quasisymmetric Schur functions - Hopf algebras, quasisymmetric functions, and Young composition tableaux}, Springer, 2013.


\bibitem{mason-1}
{\sc S.~Mason}, {\em {A decomposition of Schur functions and an analogue of the
  Robinson-Schensted-Knuth algorithm}},
S\'em. Lothar. Combin.
57 (2006).


\bibitem{Schutzenberger}
{\sc M.-P.~Sch{\"u}tzenberger},
{\em {La correspondance de {R}obinson, Combinatoire et repr\'esentation du groupe sym\'etrique
              ({A}ctes {T}able {R}onde {CNRS}, {U}niv. {L}ouis-{P}asteur
              {S}trasbourg, {S}trasbourg, 1976)}}, Lecture Notes in Math. Vol. 579, Springer, Berlin, 1977, 59--113.

\bibitem{stanley-ec2}
{\sc R.~Stanley}, {\em {Enumerative
  Combinatorics}}, vol.~2, Cambridge University Press, 1999.


\bibitem{stanley-shuffle}
{\sc R.~Stanley},
{\em Generalized riffle shuffles and quasisymmetric functions},
Ann. Comb.
5 (2001)
479--491.

\bibitem{Stembridge}
{\sc J.~Stembridge},
{\em A concise proof of the {L}ittlewood-{R}ichardson rule},
Electron. J. Combin.
9 (2002).

\cnew{
\bibitem{Tewari}
{\sc V.~Tewari},
{\em Backward jeu de taquin slides for composition tableaux and a noncommutative Pieri rule},
Electron. J. Combin. 22 (2015).
}

\bibitem{Thomas}
{\sc G.~Thomas},
{\em On Schensted's construction and the multiplication of Schur functions},
Adv. Math.
30 (1978)
8--32.
\end{thebibliography}
\end{document}